\DeclareMathOperator{\rank}{rank}
\DeclareMathOperator{\curl}{curl}
\DeclareMathOperator{\Hess}{Hess}
\newcommand\1{\lower 9pt\hbox{\underbar{}}}
\numberwithin{equation}{section}
\newtheorem*{theorem*}{Main Theorem}
\newtheorem {theorem}[equation]                   {Theorem}
\newtheorem {lemma}[equation]           {Lemma}
\newtheorem {proposition}[equation]     {Proposition}
\newtheorem {conjecture}[equation]      {Conjecture}
\newtheorem {corollary}[equation]       {Corollary}
\theoremstyle{definition}
\newtheorem {definition}[equation]{Definition}
\newtheorem {remark}[equation]          {Remark}
\newtheorem {example}[equation]         {Example}
\def\R{\mathbb{R}}
\newcommand{\norm}[1]{\left\lVert#1\right\rVert}
\def\curl{\operatorname{curl}}
\begin{document}
\title{On the singular Weinstein conjecture and the existence of escape orbits for $b$-Beltrami fields}

 \author{Eva Miranda}\address{ Eva Miranda,
Laboratory of Geometry and Dynamical Systems, Departament of Mathematics, EPSEB, Universitat Polit\`{e}cnica de Catalunya BGSMath Barcelona Graduate School of
Mathematics in Barcelona and
\\ IMCCE, CNRS-UMR8028, Observatoire de Paris, PSL University, Sorbonne
Universit\'{e}, 77 Avenue Denfert-Rochereau,
75014 Paris, France }
  \email{ eva.miranda@upc.edu}
\thanks{E. M. is supported by the Catalan Institution for Research and Advanced Studies via an ICREA Academia Prize 2016. Eva Miranda and Cédric Oms are supported by the grants reference number MTM2015-69135-P (MINECO/FEDER) and reference number 2017SGR932 (AGAUR) and the project PID2019-103849GB-I00 / AEI / 10.13039/501100011033.}
\author{Cédric Oms}
\address{ C\'edric Oms,
Laboratory of Geometry and Dynamical Systems, Department of Mathematics, EPSEB, Universitat Polit\`{e}cnica de Catalunya BGSMath Barcelona Graduate School of
Mathematics in Barcelona}
 \email{ cedric.oms@upc.edu}
\thanks{C. O. has been supported by an FNR-AFR PhD predoctoral grant (project GLADYSS) until October 2nd, 2020 and by a SECTI-Postdoctoral grant financed by Eva Miranda's ICREA Academia immediately after.}

\author{Daniel Peralta-Salas} \address{Daniel Peralta-Salas, Instituto de Ciencias Matem\'aticas, Consejo Superior de Investigaciones Cient\'ificas,
28049 Madrid, Spain}
 \email{dperalta@icmat.es}
\thanks{D. P.-S. is supported by the grants MTM PID2019-106715GB-C21 (MICINN) and Europa Excelencia EUR2019-103821 (MCIU). This work is supported in part by the
ICMAT--Severo Ochoa grant SEV-2015-0554 and the CSIC grant 20205CEX001.}

\begin{abstract}
Motivated by Poincaré's orbits going to infinity in the (restricted) three-body problem (see \cite{poincare} and \cite{chenciner}), we investigate the generic existence of heteroclinic-like orbits in a neighbourhood of the critical set of a $b$-contact form. This is done by using the singular counterpart~\cite{CMP} of Etnyre--Ghrist's contact/Beltrami correspondence~\cite{EG}, and genericity results concerning eigenfunctions of the Laplacian established by Uhlenbeck \cite{uhlenbeck}. Specifically, we analyze the $b$-Beltrami vector fields on $b$-manifolds of dimension $3$ and prove that for a generic asymptotically exact $b$-metric they exhibit escape orbits. We also show that a generic asymptotically symmetric $b$-Beltrami vector field on an asymptotically flat $b$-manifold has a generalized singular periodic orbit and at least $4$ escape orbits. Generalized singular periodic orbits are trajectories of the vector field whose $\alpha$- and $\omega$-limit sets intersect the critical surface.  These results are a first step towards proving the singular Weinstein conjecture.
\end{abstract}
	
\maketitle
	
\section{Introduction}\label{S:intro}

{Chazy established in 1922 \cite{chazyens} that the solutions of the three body problem as time tends to infinity can be of four different types: hyperbolic, parabolic, bounded and oscillatory. Oscillatory motions happen when the body with negligible mass escapes from any bounded region and returns infinitely often to it. Hyperbolic motions can often be compactified as heteroclinic-like orbits. The compactification can be understood as a regularization transformation that includes singularities in the symplectic structure that models this problem.}
{In this regularization, oscillatory motions have sequences of points tending to an invariant hypersurface corresponding to the singular set of those geometric structures.}

{When it comes to Hamiltonian dynamics it is often convenient to restrict the dynamics to a level-set of the Hamiltonian. Whenever the Liouville vector field is transverse to this level set, it induces a contact structure on it. This idea has been applied successfully to several problems in celestial mechanics such as the restricted three-body problem. For instance in \cite{AFKP} the authors prove the existence of periodic orbits on any level set $H=c$ with $c< H(L_1)$, where $L_1$ is the first Lagrange point as an application of the Weinstein conjecture to the induced contact manifold.
  The positive answer to Weinstein conjecture also yields the existence of periodic orbits in the context of hydrodynamics.}

	Indeed, as suggested by Sullivan and developed by Etnyre and Ghrist~\cite{EG}, there is a one-to-one correspondence between Reeb fields and non-vanishing Beltrami vector fields (a particular type of stationary fluid flows). Accordingly, the positive answer to the Weinstein conjecture implies  the existence of periodic orbits for any non-vanishing Beltrami vector field on a compact $3$-manifold. Vice versa, analytic techniques from hydrodynamics can provide a deep insight on the dynamics and topology of contact manifolds, as is shown for instance in~\cite{peraltaradu}. By virtue of this correspondence, both fields can potentially benefit from mutual development. This connection gained a new impulse recently with~\cite{CMPP} where universality features of Euler flows were studied using the h-principle in contact geometry. In the context of stationary fluid flows in even dimensions, Ginzburg and Khesin established a beautiful connection with symplectic geometry and integrable systems~\cite{GK2,GK1}.

The aforementioned Beltrami/contact picture has recently been generalized by including codimension one singularities. These generalized contact structures are known as \emph{$b$-contact structures} and the codimension one hypersurface, which is invariant under the flow of the Reeb field, is called \emph{critical hypersurface}. The geometry of this generalization of contact structures has been extensively studied in~\cite{MO1}, and the relation of those to Beltrami dynamics was initiated in~\cite{CMP}, proving a one-to-one correspondence between Reeb fields of $b$-contact forms and non-vanishing Beltrami vector fields on $b$-manifolds (\emph{$b$-Beltrami vector fields}). In view of this correspondence, it is important to understand the dynamical behavior of Reeb fields on $b$-contact manifolds. Those manifolds can be viewed as open regular contact manifolds satisfying certain behavior at the open ends. Due to the lack of compactness, the dynamics is fundamentally different from the smooth case. As it is shown in~\cite{MO2}, there are compact $b$-contact manifolds in any dimension, without periodic orbits away from the critical hypersurface, which contrasts the Weinstein conjecture.
Although this non-existence of periodic Reeb orbits away from $Z$ can be thought of as a counterexample to the Weinstein conjecture, a careful analysis of the intriguing dynamics on those manifolds lead the authors in~\cite{MO2} to conjecture that the next best case scenario holds:

\begin{conjecture}[Singular Weinstein conjecture, \cite{MO2}]\label{conj}
Let $(M,\alpha)$ be a compact $b$-contact manifold with critical hypersurface $Z$. Then the Reeb field $R_\alpha$ has a singular periodic orbit. More precisely, there is a Reeb orbit $\gamma:\R\to M \setminus Z$ such that $\lim_{t\to \pm \infty} \gamma(t)=p_{\pm}\in Z$ and $R_\alpha({p_\pm})=0$.
\end{conjecture}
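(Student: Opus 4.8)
Since the final statement is a conjecture rather than a settled result, what follows is a plan of attack: I describe the route through hydrodynamics that reduces it to a question about $b$-Beltrami fields, the partial statements one can prove along the way, and the point at which a full proof breaks down. The plan is to use the singular Etnyre--Ghrist correspondence of \cite{CMP}: after a positive rescaling, the Reeb field $R_\alpha$ of a $b$-contact form coincides with a nowhere-vanishing $b$-Beltrami field $v$ for a suitable $b$-metric $g$, i.e.\ $\curl_g v = v$ and $\diver_g v = 0$, and conversely. Rescaling changes neither the unparametrized orbits nor their $\alpha$- and $\omega$-limit sets, so a singular periodic Reeb orbit is the same thing as a \emph{generalized singular periodic orbit} of a $b$-Beltrami field, and it suffices to exhibit such an orbit for $b$-Beltrami fields on $3$-dimensional $b$-manifolds. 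The gain is analytic: $b$-Beltrami fields with $\curl_g v = \lambda v$, $\lambda$ constant, are eigenfields of the self-adjoint $b$-curl operator, hence governed by the spectral theory of the $b$-Laplacian on $M\setminus Z$ with the complete $b$-metric, which has cylindrical ends.

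\textbf{Local model along $Z$ and escape orbits.} Next I would fix the behaviour near the critical surface. In a collar $Z\times(-\varepsilon,\varepsilon)$ on which the defining $b$-form is $dz/z$, a $b$-metric reads $g = dz^2/z^2 + h_z$, and the normal component of a $b$-field acquires a factor $z$ when read as an ordinary vector field; hence $Z$ is invariant, $v|_Z$ is a genuine vector field on the surface $Z$, and at a zero $p\in Z$ of $v|_Z$ the linearization has one normal eigenvalue $\lambda_0(p)$ and two tangential ones. If $\lambda_0(p)>0$ the unstable set of $p$ reaches off $Z$, producing trajectories that limit onto $p$ in backward time; if $\lambda_0(p)<0$ one gets orbits limiting onto $p$ in forward time. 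These are the escape orbits. Concatenating a backward-escape orbit at some $p_-$ (with $\lambda_0(p_-)>0$) and a forward-escape orbit at some $p_+$ (with $\lambda_0(p_+)<0$) that agree away from $Z$ produces exactly the orbit demanded by the conjecture. For \emph{asymptotically exact} and \emph{asymptotically flat} $b$-metrics the collar model is rigid enough that this local picture is exact and $v|_Z$ is itself, up to lower-order terms, a Beltrami field on $Z$.

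\textbf{Genericity.} Here enters Uhlenbeck's theorem \cite{uhlenbeck}: for a generic metric the Laplacian has simple spectrum and Morse eigenfunctions. The core technical step is to transplant this to the relevant Banach manifold of $b$-metrics (asymptotically exact, respectively asymptotically symmetric) by a transversality/Sard--Smale argument, so that for a generic such $b$-metric the associated $b$-curl eigenfields are nondegenerate along $Z$: the zero set of $v|_Z$ is finite, every zero is hyperbolic inside $Z$, and $\lambda_0\neq 0$ at each of them. A Poincaré--Hopf count on $Z$, whose Euler characteristic is pinned down in the asymptotically flat case, then forces the existence of zeros of each normal-stability type and hence of the stated number of escape orbits; the asymptotic symmetry hypothesis supplies in addition a pair $p_\pm$ joined by a trajectory in $M\setminus Z$, yielding a generalized singular periodic orbit.

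\textbf{Main obstacle.} The hard part --- and the reason the conjecture in full generality is out of reach by this method --- is the global matching step: near $Z$ everything is controlled, but away from $Z$ a Beltrami field may be as complicated as one likes (invariant tori, ergodic components, chaos), so there is no a priori reason that an orbit leaving $Z$ in backward time ever returns to $Z$. Closing it requires a genuinely global device --- a return map to a cross-section in the collar, a Conley-index or degree-theoretic shooting argument between the $p_-$ and $p_+$ families, or an exact symmetry cutting down the dimension of the problem --- and only the last is exploited above, which is precisely why an unconditional generalized singular periodic orbit is obtained only under the asymptotic symmetry assumption and not in the generality of Conjecture~\ref{conj}.
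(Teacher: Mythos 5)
You are right that this statement is a conjecture: the paper does not prove it either, and your overall framework --- pass through the singular Beltrami/contact correspondence of \cite{CMP}, study the field on the critical surface, get escape orbits from the transverse (un)stable manifolds at its nondegenerate zeros, and use Uhlenbeck-type genericity --- is exactly the route the paper takes for its partial results (Theorems~\ref{thm:mainthmreformulation} and~\ref{thm:mainthm2ref}), and your ``main obstacle'' paragraph matches the paper's own assessment (cf.\ Remark~\ref{rmk:generalcase}): nothing forces an orbit leaving $Z$ to return, so the full conjecture stays open. The precise bridge to Uhlenbeck, which you only gesture at, is that $X|_Z$ is \emph{Hamiltonian} and its exceptional Hamiltonian $-X_z$ is an eigenfunction of the Laplacian $\Delta_h$ on $Z$ with eigenvalue $\lambda^2$ (Proposition~\ref{prop:eigenfunctionlaplacian}); genericity of Morse eigenfunctions with regular zero set then gives at least $2+b_1(Z)$ nondegenerate zeros with nonzero normal eigenvalue $-H(p_k)$, hence that many escape orbits --- no Poincar\'e--Hopf count or ``zeros of each normal-stability type'' is needed.

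Two of your specific mechanisms, however, would not go through as stated. First, in the asymptotically flat case you cannot obtain genericity by a Sard--Smale perturbation of the $b$-metric: the metric is \emph{fixed} flat near $Z$, and the flat Laplacian on $\mathbb T^2$ has eigenvalues of multiplicity at least $4$, so Uhlenbeck's theorem is unavailable. The paper instead perturbs the \emph{field}: Lemma~\ref{L:genflat} shows, via the parametric transversality theorem applied inside a fixed eigenspace $\mathcal E_\mu$, that a generic eigenfunction of the flat Laplacian is Morse, and then a cutoff/gluing lemma extends the perturbed $1$-form to a genuine $b$-contact form on all of $M$ whose Reeb field is again $b$-Beltrami for an asymptotically flat metric. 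Second, your claim that asymptotic symmetry ``supplies a pair $p_\pm$ joined by a trajectory in $M\setminus Z$'' is precisely the global matching you admit you cannot do; the paper does not do it either. Its generalized singular periodic orbit (whose endpoints need not be zeros of the field, unlike in the conjecture) comes from a softer argument: near the global minimum of the exceptional Hamiltonian (negative, since a nonconstant eigenfunction has zero mean) one has $\dot z\le 0$, so a compact set near $Z$ is trapped forward in time, and volume preservation of $X$ on $M\setminus Z$ then forces its $\alpha$-limit to meet $Z$ as well. Only under \emph{global} symmetry on the globally flat $\mathbb T^3$ does the paper actually produce singular periodic orbits (Proposition~\ref{prop:genericsingularweinstein}), by an explicit integration as in Example~\ref{ex:T3flat}.
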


A motivating example for this conjecture is the following, which is the $b$-analogue of the well known $ABC$ Beltrami fields~\cite{AK} on the flat torus $\mathbb T^3$:


\begin{example}\label{ex:T3flat}
Consider on the $3$-torus $\mathbb{T}^3$ the $b$-vector field given by
$$X=C\cos y \partial_x+B\sin x \partial_y+(C\sin y+B\cos x)\sin z \partial_z$$
where $|B|\neq |C|$ are two constants. This is a $b$-Beltrami vector field with constant proportionality factor $1$ for the globally flat $b$-metric given by $g=dx^2+dy^2+\frac{dz^2}{\sin^2 z}$, on the $b$-manifold $(\mathbb T^3,Z)$, where the (disconnected) critical surface is $Z=\{z=0\}\cup \{z=\pi\}$. Accordingly, $X$ can be viewed as the Reeb vector field (up to reparametrization) associated to the $b$-contact form $\alpha=g(X,\cdot)$. We claim that for all values of $B$ and $C$, $|B|\neq |C|$, there exist singular periodic orbits. The restriction of $X$ on $Z$ is a Hamiltonian vector field, with Hamiltonian function given by $H=-X_z=-C\sin y-B\cos x$, which is a first integral of $X$. Hence the integral curve of $X$ through the point $(x_0,y_0,z_0)$ satisfies $\dot{z}(t)=H(x,y)\sin z(t)=H(x_0,y_0) \sin z(t)$, so the explicit expression of $z(t)$ is given by $z(t)=2\cot^{-1}(\exp(c-H(x_0,y_0)t))$, where $c$ is a constant such that $z(0)=z_0$. Let us now analyze the singular periodic orbits of $X$. On each of the two connected components of the critical surface, there are four critical points of $H$, given in $(x,y)\in \mathbb T^2$ coordinates by $p_1=(0,\frac{\pi}{2})$, $p_2=(\pi,\frac{\pi}{2})$, $p_3=(0,\frac{3\pi}{2})$ and $p_4=(\pi,\frac{3\pi}{2})$. Since $|B|\neq |C|$, it is easy to check that $H(p_k)\neq 0$. The integral curve with initial condition $(p_k,z_0)$, where $z_0\in (0,\pi)$, is then given by $\gamma(t)=(p_k,z(t))$. Assume that $H(p_k)>0$ (the opposite case is similar). Then $\lim_{t\to \infty} z(t)=0$ and $\lim_{t\to -\infty} z(t)=\pi$, thus implying that $\gamma$ is a singular periodic orbit. As the case when $z_0\in (\pi, 2\pi)$ is analogous, we obtain $8$ singular periodic orbits; in particular, the singular Weinstein conjecture is satisfied for all $|B|\neq |C|$.
\end{example}

In this article we will study the existence of singular periodic orbits (and generalizations) on $b$-contact manifolds, via the analysis of the integral curves of $b$-Beltrami vector fields. {This is, in particular, the first instance where the (singular) contact/Beltrami correspondence is used in the reverse direction.} Since $b$-Beltrami fields are stationary solutions of the Euler equations on manifolds with cylindrical ends (the $b$-model), our results are also of interest in the study of incompressible fluid flows in equilibrium.

\emph{Singular periodic orbits} are a particular case of \emph{escape orbits}, which are Reeb orbits $\gamma\subset M\setminus Z$ that tend in forward (or backward) time to an equilibrium point, that is $\gamma \subset M \setminus Z$ such that $\lim_{t\to \infty}\gamma(t)=p$ where $p$ is a zero of $R_\alpha$ in $Z$ (respectively $\lim_{t\to -\infty}\gamma(t)=p$). From a dynamical point of view, the singular periodic orbits are contained in the intersection of the unstable manifold of $p_-$ and the stable manifold of $p_+$. Since the existence of escape orbits is a necessary condition for the existence of singular periodic orbits, establishing the existence of escape orbits is a first step towards proving the singular Weinstein conjecture.

\begin{figure}[hbt!]
\begin{center}
\begin{tikzpicture}[scale=2]
\fill[gray!20] (-1.5,-0.3) rectangle +(3,0.6);
 \draw (1,0) arc (0:180:1 and 1);
 \draw[dashed,color=red] (-1.5,0) --  (1.5,0);
 \fill (1,0) circle[radius=0.5pt];
 \fill (-1,0) circle[radius=0.5pt];
 (-1,-0.7) -- (-1,0.7) -- cycle;
\fill [shift={(-0.05,1)},scale=0.05,rotate=180]   (0,0) -- (-1,-0.7) -- (-1,0.7) -- cycle; 
 \draw[red] (1.6,0) node {$Z$};
 \draw (0.2,1.2) node {$\gamma$};
 \draw[thick,green] (1,0) arc (0:18:1 and 1);
 \draw[thick,green] (-1,0) arc (0:-18:-1 and -1);
\end{tikzpicture}
\caption{Singular periodic orbit vs. Escape orbits (in green)}
\label{fig:singularperiodic}
\end{center}
\end{figure}
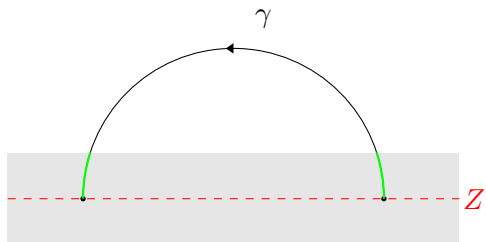


The goal of the present paper is to address Conjecture~\ref{conj} in the three-dimensional case using the aforementioned Beltrami/contact correspondence. Specifically, let $(M,Z)$ be a compact $b$-manifold of dimension $3$ endowed with a $b$-metric $g$. This defines a Riemannian $b$-manifold $(M,Z,g)$. The critical surface $Z$ may consist of several connected components. Our first main result shows that a $b$-Beltrami field exhibits escape orbits on a generic asymptotically exact $b$-manifold (see Definition~\ref{def:bexactmetric}). The meaning of generic will become clear in Section~\ref{sec:mainresult}.


\begin{theorem}\label{thm:mainthmreformulation}
Let $g$ be a generic metric in the class of asymptotically exact $b$-metrics on $(M,Z)$.
Then any $b$-Beltrami vector field on $(M,Z,g)$, which is not identically zero on $Z$, has a singular periodic orbit or at least $2+b_1(Z)$ escape orbits. Here, $b_1(Z)$ denotes the first Betti number of the critical surface $Z$.
\end{theorem}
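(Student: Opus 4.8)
\emph{Proof proposal.} The plan is to transfer the question to the critical surface $Z$, where a $b$-Beltrami field restricts to a Hamiltonian flow, use Uhlenbeck's theorem to control the associated Laplace eigenfunction, count critical points by Morse theory, and finally manufacture escape orbits from the normal hyperbolicity of $X$ along $Z$. By the $b$-Beltrami/$b$-Reeb correspondence of \cite{CMP}, $X$ is, up to reparametrization, the Reeb field of the $b$-contact form $\alpha=\iota_X g$. Write $\alpha=u\,\tfrac{dz}{z}+\beta$ on a collar $(-\varepsilon,\varepsilon)_z\times Z$ of $Z=\{z=0\}$, with $u$ smooth and $\beta$ a smooth family of $1$-forms on $Z$, and set $u_0:=u|_Z$. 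As in \cite{MO1,MO2}, $X$ is tangent to $Z$, the surface $Z$ is closed and orientable (it carries an area form coming from the $b$-contact condition), $X|_Z$ is, up to reparametrization, the Hamiltonian vector field of $u_0$ for that area form, so the ordinary zeros of $X$ all lie over the critical points of $u_0$, and in the collar $\dot z=u_0(x,y)\,z+O(z^2)$, so $X$ is exponentially repelling (where $u_0>0$) or attracting (where $u_0<0$) in the normal direction near each zero.

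The genericity of $g$ is used only through $u_0$. When the proportionality factor of $X$ is a constant $\lambda$, computing the $b$-curl of $X$ to leading order along $Z$ and using that $g$ is asymptotically exact --- hence asymptotic near $Z$ to the cylindrical $b$-metric $\tfrac{dz^2}{z^2}+g_Z$ with $g_Z:=g|_{TZ}$ --- one finds that $u_0$ is an eigenfunction of $\Delta_{g_Z}$ with eigenvalue $\lambda^2$. Since $X$ is not identically zero on $Z$, $u_0$ is non-constant, so $\lambda\neq0$ and $u_0$ changes sign. By Uhlenbeck's theorem \cite{uhlenbeck} applied to the closed surface $(Z,g_Z)$ --- whose metric is generic whenever the ambient asymptotically exact $b$-metric is --- for a generic (residual) set of metrics every eigenfunction of $\Delta_{g_Z}$ is a Morse function with simple eigenvalue and, after a further standard perturbation, with $0$ a regular value. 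Thus, generically, $u_0$ is Morse on $Z$ and nonzero at each of its critical points; by the Morse inequalities it has at least $\sum_j b_j(Z)\geq 2+b_1(Z)$ critical points, using that $Z$ is closed and orientable.

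Now fix a critical point $p$ of $u_0$. Since $X$ is tangent to $Z$, the linearization $DX(p)$ preserves $T_pZ$, where it acts as a nowhere-zero multiple of the (traceless, nondegenerate) linearization of the Hamiltonian flow of $u_0$ at $p$, while the induced action on $T_pM/T_pZ$ is multiplication by $u_0(p)\neq0$. Hence $p$ is a hyperbolic or partially hyperbolic zero of $X$ whose normal line lies in the unstable subspace when $u_0(p)>0$ and in the stable subspace when $u_0(p)<0$. In either case the unstable (resp. stable) set of $p$ is not contained in $Z$, so it contains orbits inside $M\setminus Z$ that limit to $p$ in backward time (if $u_0(p)>0$) or in forward time (if $u_0(p)<0$) --- that is, escape orbits. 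If two escape orbits obtained from distinct critical points $p\neq p'$ coincided, the common orbit would have one limit set $\{p\}$ and the other $\{p'\}$, with $X$ vanishing at both: this is a singular periodic orbit. Therefore either a singular periodic orbit exists, or the escape orbits attached to the $\geq 2+b_1(Z)$ critical points of $u_0$ are pairwise distinct, giving at least $2+b_1(Z)$ escape orbits, which proves the theorem.

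I expect the main obstacle to be the second paragraph, specifically the derivation of the collar normal form of $\alpha$ and the identification of $u_0$ with a $\Delta_{g_Z}$-eigenfunction: this is the only point where the hypothesis on $g$ (rather than on $X$) is used, and it is exactly here that the precise meaning of ``asymptotically exact'' in Definition~\ref{def:bexactmetric} must be exploited to control the $b$-curl operator and extract its leading coefficient on $Z$. Treating a non-constant proportionality factor --- where $u_0$ instead satisfies an eigenvalue-type equation for a first-order perturbation of $\Delta_{g_Z}$ --- needs additional care. By contrast, the dynamical input (normal hyperbolicity at the zeros, existence of escape orbits, the coincidence-versus-singular-periodic dichotomy) and the Morse count are fairly standard, once the extra generic conditions ($0$ a regular value of $u_0$, and, if a clean eigenvalue comparison at each $p$ is desired, $u_0(p)$ not an eigenvalue of $DX(p)|_{T_pZ}$) are folded into the genericity statement.
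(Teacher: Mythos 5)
Your proposal is correct and follows essentially the same route as the paper: identify the exceptional Hamiltonian $u_0=-X_z|_Z$ as a $\lambda^2$-eigenfunction of the Laplacian of the induced metric on $Z$, invoke Uhlenbeck's genericity theorem, count critical points via the Morse inequalities, and produce escape orbits from the stable/unstable manifolds at the (hyperbolic) zeros, with the coincidence dichotomy yielding the singular periodic orbit alternative. The two caveats you flag are non-issues in the paper's setting: the proportionality factor is a constant by the definition of $b$-Beltrami field, and regularity of the zero set of nonconstant eigenfunctions is already part of Uhlenbeck's theorem, so no further perturbation is needed.
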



In light of the Beltrami/contact correspondence, this result has a clear implication in the $b$-contact context: the Reeb flow of a generic Melrose $b$-contact form has a singular periodic orbit or at least $2+b_1(Z)$ escape orbits. For the concept of \emph{generic Melrose $b$-contact form} see Definition~\ref{def:bMelrose}.

The proof of Theorem~\ref{thm:mainthmreformulation} lies in a careful study of the $b$-Beltrami vector field on $Z$. As is proved in~\cite{MO2}, the $b$-Beltrami vector field on $Z$ is Hamiltonian. Additionally, the Beltrami equation yields that the Hamiltonian function, called \emph{exceptional Hamiltonian} (see Definition \ref{def:exceptionalHam}) is in fact an eigenfunction of the induced Laplacian on the critical surface. By the classical work of Uhlenbeck~\cite{uhlenbeck}, the properties of the eigenfunctions of the Laplacian for a generic set of metrics are well understood: they are Morse and zero is a regular value. Theorem~\ref{thm:mainthmreformulation} then follows from this result and a local analysis of the zeros of the $b$-Beltrami vector field on $Z$.

The way a singular periodic orbit escapes to infinity is very particular, both its $\omega$- and $\alpha$-limit sets consist of a single point $p_\pm\in Z$. Since the limit sets of a vector field are invariant, the points $p_\pm$ are necessarily zeros of the field. One can relax this condition and introduce the notion of \emph{generalized singular periodic orbit}:

\begin{definition}\label{def:gspo}
Let $\gamma:\R\to M \setminus Z$ be an orbit of a $b$-Beltrami vector field. We say it is a generalized singular periodic orbit if there exist $t_1<t_2<\cdots<t_k\to \infty$ such that $\gamma(t_k)\to p_+\in Z$ and $t_{-1}>t_{-2}>\cdots >t_{-k}\to -\infty$ such that $\gamma(t_{-k})\to p_-\in Z$, as $k\to\infty$. In general, $p_+$ and $p_-$ may be contained in different components of $Z$, and they do not need to be zeros of the field.
\end{definition}

Equivalently, $\gamma$ is a generalized singular periodic orbit if both its $\alpha$- and $\omega$-limit sets have nonempty intersection with the critical surface $Z$. Particular cases of generalized singular periodic orbits that have attracted considerable attention are the \emph{oscillatory motions}, see~\cite{s,llibre,GMS} and Section~\ref{sec:concludingrmks}. The $b$-analogue of the $ABC$ Beltrami vector fields introduced in Example~\ref{ex:T3flat} also exhibits generalized singular periodic orbits:

\begin{example}\label{ex:gspo}
Consider the $b$-Beltrami field presented in Example~\ref{ex:T3flat} on the flat $b$-manifold $(\mathbb{T}^3,Z)$, and let $H$ be the Hamiltonian function as before and $c>0$ a regular value of $H$. Consider the integral curve $\gamma(t)$ of the point $(x_0,y_0,z_0)$, with $z_0\in (0,\pi)$ and $(x_0,y_0)\in H^{-1}(c)$. It is easy to check that both the $\alpha$- and $\omega$-limit of $\gamma(t)$ are periodic orbits that are contained in the components $\{z=\pi\}$ and $\{z=0\}$, respectively, of the critical set $Z$; accordingly, $\gamma(t)$ is a generalized singular periodic orbit that is not a singular periodic orbit.
\end{example}

Our second main result shows the generic existence of generalized singular periodic orbits on asymptotically flat $b$-manifolds (see Definition~\ref{def_flat}) for a particularly relevant class of $b$-Beltrami vector fields. Since the metric is fixed now in a neighborhood of the critical surface $Z$ (a flat $b$-metric), the statement is different from Theorem~\ref{thm:mainthmreformulation}, where the asymptotically exact $b$-metric has to be perturbed. Instead, we consider a generic set of asymptotically symmetric $b$-Beltrami vector fields on the manifold, where \emph{generic} means that the set is open and dense in the $C^k$-topology of vector fields.

\begin{theorem}\label{thm:mainthm2ref}
A generic asymptotically symmetric $b$-Beltrami vector field $X$ on an asymptotically flat $b$-manifold of dimension $3$ has a generalized singular periodic orbit. Moreover, it has a singular periodic orbit or at least $4$ escape orbits.
\end{theorem}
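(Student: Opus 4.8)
The plan is to prove the two assertions separately, both building on the same description of $X$ near $Z$. By the asymptotic flatness of the $b$-metric together with the asymptotic symmetry of $X$, each connected component $Z_i$ of the critical surface carries a collar $(-\varepsilon,\varepsilon)_{z_i}\times Z_i$ in which $g$ is the flat $b$-model and $X$ has the normal form of Example~\ref{ex:T3flat}: the restriction $X|_{Z_i}$ is the Hamiltonian vector field of the exceptional Hamiltonian $H_i$ (Definition~\ref{def:exceptionalHam}), the flow inside the collar obeys an equation of the form $\dot z_i = H_i(x,y)\,\sin z_i$, and $H_i$ (extended to be independent of $z_i$) is a first integral of $X$ on the whole collar. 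Because $H_i$ is an eigenfunction of the flat Laplacian of $Z_i$ it has zero mean, and the genericity hypothesis — open and dense in the $C^k$ topology of $b$-Beltrami fields — will be used only through the resulting property that each $H_i$ is a Morse function with $0$ a regular value; in particular $H_i$ changes sign on at least one component, $H_i^{-1}(0)$ is a finite disjoint union of circles, and $H_i(p)\neq0$ at every critical point $p$ of $H_i$.

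For the dichotomy ``singular periodic orbit or at least $4$ escape orbits'' we would reuse the proof of Theorem~\ref{thm:mainthmreformulation}. A flat $b$-metric is asymptotically exact, and in that proof the genericity of the metric was used \emph{only} to make the exceptional Hamiltonians Morse with $0$ a regular value via Uhlenbeck's theorem; here the metric is fixed, but the genericity of $X$ supplies exactly this property, so the rest of the argument carries over unchanged. Concretely, a critical point $p$ of some $H_i$ is a zero of $X$ at which the normal $z_i$-direction is hyperbolic — expanding if $H_i(p)>0$, contracting if $H_i(p)<0$ — and the corresponding strong unstable (resp. stable) orbit, on each of the two sides of $Z_i$, escapes to $p$ in backward (resp. forward) time, unless its opposite limit set also meets $Z$, in which case it is a singular periodic orbit. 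The Morse count on the surface $Z$ then yields, exactly as in the proof of Theorem~\ref{thm:mainthmreformulation}, at least $2+b_1(Z)$ escape orbits or a singular periodic orbit. Finally, by asymptotic flatness each component of $Z$ carries a flat metric, hence (being orientable, since $M$ is oriented and $Z$ is two-sided) is a $2$-torus, so $b_1(Z)\ge2$ and we obtain at least $4$ escape orbits or a singular periodic orbit.

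For the generalized singular periodic orbit we would argue by a volume-pumping estimate. Set $M_0:=M\setminus N$, where $N$ is a small $X$-adapted tubular neighbourhood of $Z$ made of the collars above; then $M_0$ is compact, the $b$-volume $\mu_g$ restricts to a finite smooth volume form on $M_0$, and $X$ preserves it since a $b$-Beltrami field satisfies $\diver_g X=0$. On $\partial M_0$ the field $X$ is transverse off the codimension-one locus $\{H_i=0\}$; because some $H_i$ changes sign, both the inflow part $\partial_-M_0$ and the outflow part $\partial_+M_0$ are nonempty and carry positive flux. The key claim is that some orbit $\gamma$ enters $M_0$ across $\partial_-M_0$ and later leaves $M_0$ across $\partial_+M_0$. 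Indeed, otherwise every orbit crossing $\partial_-M_0$ inward would remain in $M_0$ for all subsequent times, so the flow-out map $(\partial_-M_0)\times[0,\infty)\to M_0$, $(y,s)\mapsto\gamma_y(s)$, would be well defined and injective (an orbit that has entered $M_0$ cannot hit $\partial M_0$ again in backward time, because in the collar it came from the conservation of $H_i$ together with $H_i>0$ forces $z_i$ to be monotone), and pulling back $\mu_g$ would give the image of $(\partial_-M_0)\times[0,T]$ a $\mu_g$-volume equal to $T$ times the positive inflow flux, contradicting $\mu_g(M_0)<\infty$ for $T\to\infty$. Tracing the resulting $\gamma$ backward from its inflow crossing, it enters a collar on which $H_i>0$, so $z_i$ decreases monotonically to $0$ and $\alpha(\gamma)\subset Z$; tracing it forward from its outflow crossing, it enters a collar on which $H_j<0$, so $z_j$ decreases monotonically to $0$ and $\omega(\gamma)\subset Z$. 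Hence $\gamma$ is a generalized singular periodic orbit in the sense of Definition~\ref{def:gspo}; typically $\alpha(\gamma)$ and $\omega(\gamma)$ are periodic orbits of the Hamiltonian flows of $H_i$ and $H_j$ on $Z$ rather than single points, so $\gamma$ need not be a genuine singular periodic orbit, in agreement with Example~\ref{ex:gspo}.

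The step we expect to be the main obstacle is the genericity input used in the first paragraph: one must show that, as $X$ ranges over asymptotically symmetric $b$-Beltrami fields, the exceptional Hamiltonians $H_i$ fill out an open dense subset of the eigenspace of the \emph{fixed} flat Laplacian on $Z_i$ on which they are Morse with $0$ a regular value. Because the metric near $Z$ cannot be perturbed, this is a \emph{constrained} transversality problem — admissible perturbations move $H_i$ only within a fixed eigenspace, or to a nearby eigenvalue — and one has to verify, using the explicit structure of the eigenspaces of a flat torus, that this remaining freedom is still rich enough; for the first nonzero eigenvalue this reproduces exactly the condition $|B|\neq|C|$ of Example~\ref{ex:T3flat}. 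Once this is granted, the other ingredients — the collar normal form, the hyperbolicity of the normal direction at the zeros of $X$, and the volume estimate above — are routine.
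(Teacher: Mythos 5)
You correctly isolate the genericity input as the main obstacle, but you leave it unproven, and this is exactly the nontrivial part of the statement: with ``generic'' meaning an open and dense set of asymptotically symmetric $b$-Beltrami fields in the $C^k$ topology, what you have actually proved is only that every such field whose exceptional Hamiltonian is Morse with regular zero level has the stated orbits. The paper closes this gap in two steps. First, a spectral lemma (Lemma~\ref{L:genflat}): inside each \emph{fixed} eigenspace $\mathcal{E}_\mu$ of the flat Laplacian on $\mathbb{T}^2$, the Morse eigenfunctions form an open and dense set. This is proved with the parametric transversality theorem applied to $F(x,a)=\sum_i a_i\nabla f_i(x)$: if $DF$ had rank $<2$ at a zero, translation invariance of the flat torus would force every $\mu$-eigenfunction to be a function of a single linear coordinate $v\cdot x$, giving $\dim\mathcal{E}_\mu=2$ and contradicting that nonzero eigenvalues have multiplicity at least $4$; regularity of the zero set is quoted from \cite{peraltaradu}. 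So the ``constrained transversality problem'' you anticipate is resolved entirely within the fixed eigenspace (no perturbation of the eigenvalue or of the metric near $Z$, and no analogue of the condition $|B|\neq|C|$ is needed). Second --- and this step is missing from your sketch altogether --- one must show that the perturbed collar data is realized by an honest $b$-Beltrami field on all of $M$ close to $X$: the paper replaces $X_z$ by a nearby Morse eigenfunction $\widehat{X_z}$, forms the symmetric model field $\widehat X$ of~\eqref{eq:model} in $\mathcal{N}(Z)$, glues its dual $1$-form to $\alpha=X^\flat$ by a cutoff lemma (a convex interpolation of two $C^k$-close contact forms is still contact), and then invokes the correspondence Theorem~\ref{thm:bBeltramicorrespondence} to produce a $b$-Beltrami field $Y$, $C^k$-close to $X$, for a possibly different but still asymptotically flat $b$-metric. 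Without this extension argument the density of the Morse/regular-zero-set property inside the class of $b$-Beltrami fields is not established, since an arbitrary perturbation of the exceptional Hamiltonian need not come from any Beltrami field on $M$.

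Apart from this gap, your argument is sound and close to the paper. The reduction of the ``singular periodic orbit or at least $4$ escape orbits'' part to the proof of Theorem~\ref{thm:mainthmreformulation}, using only that the exceptional Hamiltonian is Morse with regular zero set and that $b_1(\mathbb{T}^2)=2$, is exactly the paper's route. For the generalized singular periodic orbit your inflow/outflow flux argument on $M_0=M\setminus N$ is a valid alternative: the paper instead localizes at the minimum of the exceptional Hamiltonian (which is negative since a nonconstant eigenfunction has zero mean), shows that a slab near it is attracted to $Z$, and then uses preservation of the $b$-volume together with compactness to extract one orbit whose $\alpha$-limit also meets $Z$. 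Your key point --- that an orbit crossing the inflow boundary comes, in backward time, from inside the collar where the exceptional Hamiltonian is a nonvanishing first integral, so that $z$ is monotone and the orbit never re-crosses $\partial M_0$, which yields injectivity of the flow-out map and the linear growth of swept volume --- is correct (modulo the measure-zero tangency locus $\{H_i=0\}$ on the boundary), and sign conventions aside both proofs rest on the same two ingredients: the collar first integral from Lemma~\ref{L:sym} and the divergence-free character of $X$ with respect to the $b$-volume.
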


The proof of the second part in Theorem~\ref{thm:mainthm2ref} is also based on the aforementioned connection between the exceptional Hamiltonian of the $b$-Beltrami vector field and the eigenfunctions of the Laplacian on the critical surface (in this case the flat torus). Since the metric in a neighborhood of $Z$ is fixed now, we have to use a different genericity technique (see Lemma~\ref{L:genflat}). The first part of Theorem~\ref{thm:mainthm2ref} follows from a study of the global minimum of the exceptional Hamiltonian on $Z$ combined with an argument that exploits that a $b$-Beltrami field preserves a smooth volume form in $M\backslash Z$. Furthermore, under the same conditions as in Theorem~\ref{thm:mainthm2ref}, we shall prove (cf. Proposition~\ref{prop:genericsingularweinstein}) that if the field is globally symmetric and the $b$-metric is globally flat, the singular Weinstein conjecture holds.
	
\subsection*{Organization of the article}
After the introduction, we start reviewing the necessary results of $b$-contact manifolds and the relation to $b$-Beltrami vector fields in Section \ref{sec:preliminiaries}. A review on Uhlenbeck's theory of the generic properties of eigenfunctions of the Laplacian can be found in the same section. The proof of the two main theorems (Theorems~\ref{thm:mainthmreformulation} and~\ref{thm:mainthm2ref}) can be found in Sections~\ref{sec:mainresult} and~\ref{subsec:flat}, respectively. We end this article with a discussion of related results and future lines of research in Section~\ref{sec:concludingrmks}.
	
\section{Preliminaries}\label{sec:preliminiaries}
	
In this section, we review basic concepts and results on $b$-contact manifolds from~\cite{MO1,MO2}, as well as their connection with  $b$-Beltrami vector fields, as proved in~\cite{CMP}. We also include a review of the classical theory of Uhlenbeck~\cite{uhlenbeck} of generic eigenfunctions of the Laplacian on compact manifolds.
	
\subsection{Review of $b$-contact manifolds}
	
Let $Z\subset M$ be a smooth hypersurface, called critical hypersurface (possibly disconnected). We assume that $Z$ is the regular zero-level set of a globally defined function $z$. The $b$-tangent bundle consists in the vector bundle whose sections give rise to vector fields that are tangent to $Z$. This vector bundle is denoted by ${^b}TM$ and its dual by ${^b}T^*M$.

{As proved in \cite{GMP} in Section $3$, the restriction of sections of the $b$-tangent bundle to $Z$ is induced by an injective vector bundle morphism
$${^b}TM \to TZ$$
whose kernel is a $1$-dimensional line bundle with canonical non-vanishing section. This vector field is known as \emph{normal $b$-vector field}. In terms of the defining function $z$, the normal $b$-vector field is given by $z\partial_z$.}

The language of differential forms for the $b$-tangent bundle was introduced by Melrose \cite{melrose} and further elaborated in \cite{GMP}.
	
\begin{definition}
The differential form $\omega\in \Gamma(\bigwedge^k {^b}T^*M)$ is called a \emph{$b$-form} of degree $k$.
\end{definition}
	
For $b$-forms, the following holds.
	
\begin{lemma}[\cite{GMP}]\label{decomposition}
Let $\omega \in {^b}\Omega^k(M)$ be a $b$-form of degree $k$. Then $\omega$ decomposes as follows:
$$ \omega = \frac{dz}{z}\wedge \alpha +\beta, \quad \alpha \in \Omega^{k-1}(M),\ \beta \in \Omega^k(M).$$
\end{lemma}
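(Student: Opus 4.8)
The plan is to prove the decomposition \emph{locally} first, using the structure of ${^b}T^*M$ near $Z$, and then to patch the local pieces together with a partition of unity. Away from $Z$ the natural map ${^b}TM\to TM$ is an isomorphism, so ${^b}T^*M\cong T^*M$ there; on such a region $\omega$ is already an ordinary smooth form and one may take $\alpha=0$, $\beta=\omega$. In a coordinate chart $V$ meeting $Z$, pick adapted coordinates $(z,x_1,\dots,x_{n-1})$ with $Z=\{z=0\}$; then ${^b}T^*M|_V$ is a genuine smooth vector bundle with frame $\frac{dz}{z},dx_1,\dots,dx_{n-1}$, and since any bare $dz$ can be rewritten as $z\cdot\frac{dz}{z}$, the section $\omega$ expands as $\omega|_V=\frac{dz}{z}\wedge\bigl(\sum_I f_I\,dx_I\bigr)+\sum_J g_J\,dx_J$ with $f_I,g_J\in C^\infty(V)$ and $I,J$ multi-indices in $\{1,\dots,n-1\}$. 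This is a local decomposition $\omega|_V=\frac{dz}{z}\wedge\alpha_V+\beta_V$ with $\alpha_V\in\Omega^{k-1}(V)$, $\beta_V\in\Omega^{k}(V)$ honest smooth forms.

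Next I would globalise. Choose a locally finite cover $\{V_a\}$ of $M$ by charts of the above two types together with a subordinate partition of unity $\{\rho_a\}$, and set $\alpha:=\sum_a\rho_a\,\alpha_{V_a}$ and $\beta:=\sum_a\rho_a\,\beta_{V_a}$, each summand being extended smoothly by zero outside its chart. Then $\alpha\in\Omega^{k-1}(M)$ and $\beta\in\Omega^{k}(M)$ are globally defined smooth forms, and because $\frac{dz}{z}$ is one fixed globally defined $b$-form,
\[
\frac{dz}{z}\wedge\alpha+\beta=\sum_a\rho_a\Bigl(\frac{dz}{z}\wedge\alpha_{V_a}+\beta_{V_a}\Bigr)=\sum_a\rho_a\,\omega=\omega,
\]
which is precisely the asserted decomposition.

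Two remarks would close the argument. First, the decomposition is not unique: replacing $(\alpha,\beta)$ by $(\alpha+z\gamma,\ \beta-dz\wedge\gamma)$ for any $\gamma\in\Omega^{k-1}(M)$ leaves $\omega$ unchanged; a canonical normalisation is available on a collar $Z\times(-\epsilon,\epsilon)$ adapted to $z$, namely $\alpha=\iota_{z\partial_z}\omega$ (contraction with the normal $b$-vector field), which automatically gives $\iota_{\partial_z}\alpha=\iota_{\partial_z}\beta=0$. Second, the one point genuinely needing care — and the real content of the lemma — is the local step: one must check that collecting the $\frac{dz}{z}$-terms of a $b$-form produces honest smooth forms $\alpha_V,\beta_V$, with no spurious $\frac1z$ singularity along $Z$. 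This is exactly where one uses that the extra generator of ${^b}T^*M$ is $\frac{dz}{z}$, not $dz$, so that the coefficients $f_I,g_J$ of $\omega$ in this frame are smooth up to and across $Z$; beyond this bookkeeping there is no serious obstacle.
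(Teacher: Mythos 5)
Your argument is correct: the local expansion in the frame $\frac{dz}{z},dx_1,\dots,dx_{n-1}$ of ${}^bT^*M$ is exactly what the definition of a $b$-form gives, the away-from-$Z$ charts are handled trivially, and since $\frac{dz}{z}$ is a single globally defined $b$-form (here $Z=\{z=0\}$ for a global defining function $z$), wedging with it commutes with the partition-of-unity sum, so the patching step goes through. Note that the paper itself offers no proof — it quotes the lemma from \cite{GMP} — so the comparison is with that reference, whose argument is the more canonical one you relegate to your closing remark: one sets $\alpha:=\iota_{z\partial_z}\omega$ on a tubular neighbourhood of $Z$, cuts it off to extend it to all of $M$, and checks that $\beta:=\omega-\frac{dz}{z}\wedge\alpha$ is then an honest smooth $k$-form. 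The two routes buy slightly different things: the contraction argument produces a distinguished representative (no choices beyond the cutoff, and it makes the non-uniqueness you describe transparent, since any two choices of $\alpha$ differ by a form vanishing on $Z$ in the conormal direction), whereas your frame-plus-partition-of-unity proof is more elementary and makes explicit where smoothness of the coefficients across $Z$ is used, namely that the extra generator of ${}^bT^*M$ is $\frac{dz}{z}$ rather than $dz$. Either proof is acceptable; yours is complete as written.
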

The exterior derivative for smooth differential forms can be extended by defining
$$d\omega:= \frac{dz}{z} \wedge d\alpha +d\beta.$$
	
\begin{remark} The restriction of the
closed form $d\alpha$ to the surface $z=0$ is often called the residue of
$d\omega$ as it was done in \cite{geoffandeva} and it is customary in the works of polar homology, see \cite{boris}.
\end{remark}
\begin{definition}
Let $(M,Z)$ be a (2n+1)-dimensional $b$-manifold. A $b$-contact form is a $b$-form of degree one $\alpha \in {^b\Omega^1(M)}$, that satisfies $\alpha \wedge (d\alpha)^n \neq 0$ as a section of $\Lambda^{2n+1}(^bT^*M)$. We say that the pair $(M,\alpha)$ is a $b$-contact manifold.
\end{definition}
	
Associated to a $b$-contact form, there exists a unique vector field, that is tangent to the critical hypersurface, called the Reeb vector field, defined by the equations
$$\begin{cases} \iota_{R_\alpha}d\alpha=0 \\ \iota_{R_\alpha}\alpha=1. \end{cases}$$
It turns out that the Reeb vector field restricted to $Z$ is a Hamiltonian vector field:

\begin{proposition}[\cite{MO1}]\label{prop:ReebHamdim3}
Let $(M,\alpha=u\frac{dz}{z}+\beta)$ be a $b$-contact manifold of dimension $3$, where $u\in C^\infty(M)$ and $\beta\in \Omega^1(M)$ as in Lemma~\ref{decomposition}. Then the restriction on $Z$ of the $2$-form $\Theta= u d\beta+\beta\wedge du$ is symplectic and the Reeb vector field is Hamiltonian with respect to $\Theta$ with Hamiltonian function $-u$, i.e., $\iota_{R_\alpha} \Theta= du$.
\end{proposition}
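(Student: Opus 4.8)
The statement is local around $Z$, so the plan is to fix a collar $(z,x,y)$ with $Z=\{z=0\}$ and to keep track, for every $b$-form that appears, of its $\tfrac{dz}{z}$-component and its smooth component in the sense of Lemma~\ref{decomposition}. A first reduction normalises $\beta$: writing $\beta=\beta_z\,dz+\beta_1\,dx+\beta_2\,dy$ and using $u\tfrac{dz}{z}+\beta_z\,dz=(u+z\beta_z)\tfrac{dz}{z}+(\beta-\beta_z\,dz)$, one may replace $u$ by $u+z\beta_z$ and $\beta$ by $\beta-\beta_z\,dz$. This changes neither $\alpha$ nor $R_\alpha$, and it changes neither $u|_Z$ nor the pullback $i^*\beta$ (where $i\colon Z\hookrightarrow M$ is the inclusion), hence neither the $2$-form $i^*\Theta=(u|_Z)\,d(i^*\beta)+(i^*\beta)\wedge d(u|_Z)$; so I may assume $\beta=\beta_1\,dx+\beta_2\,dy$ has no $dz$-component.

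Next I would compute $d\alpha$. Since $\tfrac{dz}{z}$ is a closed $b$-$1$-form, $d\alpha=du\wedge\tfrac{dz}{z}+d\beta$, and pulling the $\tfrac{dz}{z}$ to the front one finds $d\alpha=\tfrac{dz}{z}\wedge\big(-d_{xy}u+z\,\partial_z\beta\big)+d_{xy}\beta$, where $d_{xy}$ denotes the $\{dx,dy\}$-part of $d$, so that $d_{xy}u$ and $\partial_z\beta$ are smooth $1$-forms in $dx,dy$ only and $d_{xy}\beta$ is a multiple of $dx\wedge dy$. Wedging with $\alpha$ kills the $\tfrac{dz}{z}\wedge\tfrac{dz}{z}$ term and the term $\beta\wedge d_{xy}\beta$ (a $3$-form in the two variables $x,y$), leaving
\[\alpha\wedge d\alpha=\frac{dz}{z}\wedge\Big(u\,d_{xy}\beta+\beta\wedge d_{xy}u-z\,\beta\wedge\partial_z\beta\Big).\]
The bracketed $2$-form is a multiple of $dx\wedge dy$, so near $Z$ the $b$-contact condition $\alpha\wedge d\alpha\neq0$ is precisely the assertion that this bracket is nowhere vanishing; evaluating it at $z=0$ (where the last summand drops) gives $i^*\Theta=\Theta|_Z$. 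Hence $\Theta|_Z$ is a nowhere-vanishing $2$-form on the surface $Z$, and since $\dim Z=2$ it is automatically closed; this proves the first claim.

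For the Hamiltonian statement I would use the defining equations of $R:=R_\alpha$. As $R$ is a $b$-vector field it is tangent to $Z$, so $R^z=z\,g$ with $g$ smooth, $\iota_R\tfrac{dz}{z}=g$; put $f_0:=g|_Z$ and let $R_0:=R|_Z$ be the induced vector field on $Z$. Contracting the above expression for $d\alpha$ with $R$ and splitting $\iota_R d\alpha=0$ into its $\tfrac{dz}{z}$-part and its smooth part yields two identities: $\iota_R(-d_{xy}u+z\,\partial_z\beta)=0$, which on $Z$ says $R_0(u|_Z)=0$, i.e. $u|_Z$ is a first integral of $R_0$; and $g\,(-d_{xy}u+z\,\partial_z\beta)+\iota_R\,d_{xy}\beta=0$, which on $Z$ says $\iota_{R_0}\,d(i^*\beta)=f_0\,d(u|_Z)$. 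The condition $\iota_R\alpha=1$ restricts on $Z$ to $(u|_Z)\,f_0+\iota_{R_0}(i^*\beta)=1$. Combining these,
\[\iota_{R_0}(\Theta|_Z)=(u|_Z)\,\iota_{R_0}d(i^*\beta)+(\iota_{R_0}i^*\beta)\,d(u|_Z)-(i^*\beta)\,\iota_{R_0}d(u|_Z)=\big((u|_Z)\,f_0+\iota_{R_0}i^*\beta\big)\,d(u|_Z)=d(u|_Z),\]
where the third term drops because $R_0(u|_Z)=0$ and the last equality uses $\iota_R\alpha=1$. As $d(u|_Z)=(du)|_Z$, this is exactly $\iota_{R_\alpha}\Theta=du$ on $Z$.

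The one point that requires care is the $b$-calculus bookkeeping: fixing the sign convention for $d$ on $b$-forms so that $d$ remains a derivation and $\tfrac{dz}{z}$ is closed (equivalently, the precise reading of the decomposition in Lemma~\ref{decomposition}), and checking that the splitting of a $b$-form into its $\tfrac{dz}{z}$-component and its smooth component is compatible both with contraction by the $b$-vector field $R$ and with pullback to $Z$. Beyond that the argument is elementary, and no genericity or global input is used.
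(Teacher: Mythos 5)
Your proof is correct. The paper itself gives no argument for this proposition (it is imported from \cite{MO1}), and your direct computation --- decomposing $d\alpha$ in the frame $\frac{dz}{z},dx,dy$, reading off $\Theta|_Z$ from the $b$-contact condition $\alpha\wedge d\alpha\neq 0$, and splitting the Reeb equations into their $\frac{dz}{z}$- and smooth components --- is exactly the standard verification, consistent with how the same decomposition is used later in the paper (e.g.\ in the proof of Proposition~\ref{prop:eigenfunctionlaplacian}); your preliminary remark that $\Theta|_Z$ is well defined despite the non-uniqueness of the splitting in Lemma~\ref{decomposition} is a worthwhile addition.
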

	
The Hamiltonian function $-u$ plays a key role in what follows in this paper and therefore we introduce the following definition:
	
\begin{definition}\label{def:exceptionalHam}
The Hamiltonian function $-u|_Z$ associated to a $b$-contact manifold $(M,\alpha)$ is called \emph{exceptional Hamiltonian}.
\end{definition}
	
In \cite{MO2}, the Reeb dynamics on $b$-contact manifolds is studied. Among other results, the authors proved:
	
\begin{itemize}
\item In dimension $3$, there exist infinitely many periodic orbits on $Z$ provided that $Z$ is compact.
\item There exist compact examples in any dimension, without any periodic orbits away from $Z$.
\end{itemize}
	
It follows immediately from these results that the statement on the existence of periodic orbits away from $Z$ needs to be refined. Instead of asking for periodic Reeb orbits, the authors conjectured, cf. Conjecture~\ref{conj}, the existence of \emph{singular periodic Reeb orbits}.
	
\begin{definition}\label{def:spo}
Let $(M,\alpha)$ be a $b$-contact manifold with critical hypersurface $Z$. A \emph{singular periodic orbit} $\gamma:\R \to M \setminus Z$ is an integral curve of the Reeb field such that $\lim_{t \to  \pm \infty} \gamma(t) =p_{\pm} \in Z$ where $R_{\alpha}(p_{\pm})=0$.
\end{definition}
	
As explained in Section~\ref{S:intro}, singular periodic orbits are a particular case of escape orbits, which play a fundamental role in this work.
	
\subsection{$b$-Beltrami vector fields}\label{SS.bBelt}
	
The motivation to study $b$-manifolds comes from manifolds with cylindrical ends: a Riemannian metric on the manifold with cylindrical ends then translated into a Riemannian metric for the $b$-tangent bundle.
	
\begin{definition}[$b$-metrics]
A $b$-metric is a bilinear positive-definite form $\Gamma({^b}T^*M \otimes {^b}T^*M)$ and $(M,Z,g)$ is called a $b$-Riemannian manifold. The $b$-metric naturally induces a $b$-form of maximal degree that is called $b$-volume form.
\end{definition}

Beltrami vector fields are a special class of solutions to the stationary Euler equations on a Riemannian $3$-manifold. They are defined as vector-valued eigenfunctions of the curl operator. Following~\cite{CMP}, Beltrami vector fields on a manifold with cylindrical ends can be modelled using a $b$-metric on a $b$-manifold $(M,Z)$. This gives rise to the concept of $b$-Beltrami vector field:

\begin{definition}
A $b$-Beltrami vector field $X$ is a vector field on a Riemannian $b$-manifold $(M,Z,g)$ such that $\curl X=\lambda X$, for some nonzero constant $\lambda$, where the curl operator is defined with respect to the $b$-metric $g$.
\end{definition}
	
Similar to the smooth case, any non-vanishing $b$-Beltrami vector field is a reparametrization of the Reeb field associated to a $b$-contact form. More precisely, we have the following:
	
\begin{theorem}[\cite{CMP}]\label{thm:bBeltramicorrespondence}
Let $(M,Z)$ be a $b$-manifold of dimension three. Any $b$-Beltrami vector field that is non-vanishing as a section of ${^b}TM$ on $M$ is a Reeb field (up to rescaling) for some $b$-contact form on $(M,Z)$. Conversely, given a $b$-contact form $\alpha$ with Reeb field $X$ then any nonzero rescaling of $X$ is a $b$-Beltrami vector field for some $b$-metric and $b$-volume form on $M$.
\end{theorem}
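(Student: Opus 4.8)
The statement is the $b$-geometric counterpart of the contact/Beltrami correspondence of Etnyre--Ghrist \cite{EG}, and the plan is to transplant their argument into the $b$-category. First I would observe that the algebraic identities one needs --- $\iota_X\iota_X=0$, Cartan's formula, $\beta\wedge\iota_X\mu=(\iota_X\beta)\,\mu$ for a $1$-form $\beta$ and a top-degree $b$-form $\mu$, and $\iota_X(\alpha\wedge d\alpha)=(\iota_X\alpha)\,d\alpha-\alpha\wedge\iota_X d\alpha$ --- hold verbatim for $b$-forms and sections of ${^b}TM$, since the $b$-calculus of Melrose carries an exterior algebra with a compatible differential; hence the only genuinely new issue is to verify that the pointwise linear-algebraic constructions below produce honest $b$-tensors, i.e. are smooth up to and across $Z$.

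\emph{From a $b$-Beltrami field to a Reeb field.} Given a $b$-Beltrami field $X$ on $(M,Z,g)$ with $\curl X=\lambda X$, $\lambda\neq 0$, non-vanishing as a section of ${^b}TM$, I would set $\alpha:=\iota_X g\in{^b}\Omega^1(M)$. Since the $b$-metric is positive-definite and $X$ is a non-vanishing $b$-section, $\iota_X\alpha=g(X,X)>0$ everywhere. By the definition of the $b$-curl, $d\alpha=\lambda\,\iota_X\mu_g$ for the $b$-volume form $\mu_g$; therefore $\iota_X d\alpha=\lambda\,\iota_X\iota_X\mu_g=0$ and, using $\beta\wedge\iota_X\mu=(\iota_X\beta)\mu$, $\alpha\wedge d\alpha=\lambda\,\alpha\wedge\iota_X\mu_g=\lambda\,g(X,X)\,\mu_g\neq 0$, so $\alpha$ is a $b$-contact form. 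Finally $R:=g(X,X)^{-1}X$ satisfies $\iota_R\alpha=1$ and $\iota_R d\alpha=0$, i.e. $R=R_\alpha$, and $X=g(X,X)\,R$ is a positive rescaling of the Reeb field of $\alpha$.

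\emph{From a Reeb field to a $b$-Beltrami field.} Conversely, given a $b$-contact form $\alpha$ with Reeb field $X=R_\alpha$ (automatically a non-vanishing section of ${^b}TM$, since $\iota_X\alpha=1$) and a nowhere-zero function $h$, I would first reduce to $h>0$ --- each component of $M$ has $h$ of constant sign, and $h<0$ is handled by replacing $\alpha$ with $-\alpha$, so that $R_{-\alpha}=-X$ --- and then aim to realize $Y:=hX$ as a curl-eigenfield. Take the $b$-volume form $\mu:=h^{-1}\,\alpha\wedge d\alpha$; it is nowhere zero by the $b$-contact condition, and since $\iota_X(\alpha\wedge d\alpha)=(\iota_X\alpha)\,d\alpha-\alpha\wedge\iota_X d\alpha=d\alpha$ one gets $\iota_Y\mu=d\alpha$, so $Y$ preserves $\mu$. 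Now construct a $b$-metric $g$ by declaring $\xi:=\ker\alpha$ to be $g$-orthogonal to $Y$, $g(Y,Y)=h$ (so that $\iota_Y g=\alpha$, since $\alpha(Y)=h$ and $\alpha|_\xi=0$), and $g|_\xi:=h^{-1/2}\,d\alpha(\,\cdot\,,J\,\cdot\,)$ for a $b$-smooth almost complex structure $J$ on the rank-$2$ $b$-bundle $\xi$ compatible with the fibrewise non-degenerate form $d\alpha|_\xi$. A short check with the $g$-orthonormal frame adapted to the splitting $Y\oplus\xi$ --- the area form of $g|_\xi$ equals $h^{-1/2}\,d\alpha|_\xi$, and both it and $d\alpha$ annihilate $Y$ and restrict to proportional forms on $\xi$ --- yields $\mu_g=h^{-1}\,\alpha\wedge d\alpha=\mu$. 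Hence $\iota_{\curl Y}\mu_g=d(\iota_Y g)=d\alpha=\iota_Y\mu_g$, i.e. $\curl Y=Y$: the rescaling $Y=hX$ is a $b$-Beltrami field with proportionality factor $1$ for $(g,\mu)$. For a constant rescaling, the given metric already works, by linearity of $\curl$.

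\emph{Expected main obstacle.} The only delicate point is $b$-regularity near the critical surface: I would need to check that $\xi=\ker\alpha$, the restriction $d\alpha|_\xi$, the compatible $J$, and the conformal factors $h^{\pm 1/2}$ extend smoothly across $Z$ as sections of the relevant $b$-bundles --- equivalently, that all the pairings involved ($\iota_X\alpha=1$, $g(Y,Y)=h$, and the volume identity $\mu_g=\mu$) are to be read in the $b$-sense, so that they remain valid at points of $Z$ where the underlying ordinary vector field degenerates. This is where the normal form $\alpha=u\,\frac{dz}{z}+\beta$ of Lemma~\ref{decomposition} and the non-degeneracy on $Z$ of the form $\Theta=u\,d\beta+\beta\wedge du$ from Proposition~\ref{prop:ReebHamdim3} enter, controlling $d\alpha|_\xi$ and hence $J$ in a neighbourhood of $Z$; away from $Z$ the construction is exactly that of Etnyre--Ghrist, and the complete argument can be found in \cite{CMP}.
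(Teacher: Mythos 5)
Your proposal is correct and follows essentially the same route as the proof this paper relies on (the theorem is quoted from \cite{CMP}, whose argument is precisely the $b$-categorical transplantation of the Etnyre--Ghrist duality): dualizing the Beltrami field with the $b$-metric to get a $b$-contact form in one direction, and in the other building the volume form $h^{-1}\alpha\wedge d\alpha$ together with a metric adapted to the splitting $\langle Y\rangle\oplus\ker\alpha$ via a complex structure compatible with $d\alpha|_{\ker\alpha}$. Your identification of the only genuinely new point --- that all constructions are carried out on the $b$-bundles ${}^bTM$, ${}^bT^*M$ so they remain smooth and nondegenerate across $Z$ --- is exactly the content of the adaptation in \cite{CMP}, so no gap remains.
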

	
\begin{remark}\label{rem:bBeltramicorrespondence}
It follows from the proof of Theorem~\ref{thm:bBeltramicorrespondence} that if $X$ is a $b$-Beltrami vector field on $(M,Z,g)$, the Reeb field associated to the $b$-contact form $\alpha:=g(X,\cdot)$ is given by $\frac{1}{\norm{X}^2}X$, where the norm is computed using the $b$-metric $g$.
\end{remark}
	
In this paper we shall consider a specially relevant class of metrics, that are known as asymptotically exact $b$-metrics. To define them, let us consider a tubular neighbourhood $\mathcal{N}(Z)$ around the critical surface and the trivialization (a diffeomorphism) given by
$$(z,P): \mathcal{N}(Z) \to (-\epsilon,\epsilon) \times Z\,.$$
Following Section 2.2 in~\cite{melrose}, the \emph{asymptotically exact $b$-metrics} are a special class of metrics on the $b$-tangent bundle:

\begin{definition}\label{def:bexactmetric}
An asymptotically exact $b$-metric is a metric $g$ on ${^b}TM$ which can be written in a neighborhood $\mathcal N(Z)$, in terms of the aforementioned trivialization, as
\begin{equation}\label{eq:exactbmetric}
g= \frac{dz^2}{z^2}+P^*h\,,
\end{equation}
where $h$ is a smooth Riemannian metric on $Z$. The space of asymptotically exact $b$-metrics of class $C^k$ on $(M,Z)$ is denoted by $\mathcal G_b^k$; in the neighborhood $\mathcal N(Z)$ it inherits via the map $P$ the $C^k$ topology of the space of $C^k$ Riemannian metrics on $Z$.
\end{definition}

The Beltrami/contact correspondence shows that a non-vanishing $b$-Beltrami vector field on an asymptotically exact $b$-manifold $(M,Z,g)$ defines a $b$-contact form. This allows us to introduce the concept of Melrose $b$-contact form, which is the class of $b$-contact forms that we consider in this article:

\begin{definition}\label{def:bMelrose}
The $b$-contact forms obtained via the correspondence Theorem~\ref{thm:bBeltramicorrespondence} from a nonvanishing $b$-Beltrami vector field on an asymptotically exact $b$-manifold are called \emph{Melrose $b$-contact forms}.
\end{definition}

We also observe that Theorem~\ref{thm:bBeltramicorrespondence} and Proposition~\ref{prop:ReebHamdim3} imply that the exceptional Hamiltonian can be read from a $b$-Beltrami vector field on an asymptotically exact $b$-manifold. More precisely, consider local coordinates $(x,y,z)$ associated to the metric splitting~\eqref{eq:exactbmetric}, with $(x,y)\in Z$ and $z$ the defining function (i.e., $Z=\{z=0\}$):
\[
g=\frac{dz^2}{z^2}+h_{11}dx^2+h_{22}dy^2+2h_{12}dxdy\,,
\]
where $h_{ij}(x,y)$ defines a Riemannian metric on $Z$. In these coordinates, a $b$-Beltrami vector field reads as:
\begin{equation}\label{Eq:bBelt}
X=X_x\partial_x+X_y\partial_y+zX_z\partial_z\,.
\end{equation}
We claim that the restriction $X|_Z$ is a Hamiltonian field with Hamiltonian function $-X_z$ (the exceptional Hamiltonian). Indeed, since the correspondence Theorem~\ref{thm:bBeltramicorrespondence} implies that $X$ is a reparametrization of the Reeb field associated to the $b$-contact form
$$\alpha=X^\flat=\frac{X_zdz}{z}+(h_{11}X_x+h_{12}X_y)dx+(h_{12}X_x+h_{22}X_y)dy\,,$$
then it follows from Proposition~\ref{prop:ReebHamdim3} that $-X_z$ is the exceptional Hamiltonian and $X|_Z$ is the corresponding Hamiltonian vector field (after rescaling of the symplectic form due to the reparametrization factor). We have then established the following:

\begin{lemma}\label{L.excHam}
Let $X$ be a $b$-Beltrami vector field on an asymptotically exact $b$-manifold. Then $X|_{Z}$ is a Hamiltonian vector field for some symplectic form on $Z$, whose corresponding exceptional Hamiltonian is~$-X_z$.
\end{lemma}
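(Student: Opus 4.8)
The plan is to deduce the statement from Proposition~\ref{prop:ReebHamdim3} via the Beltrami/contact correspondence of Theorem~\ref{thm:bBeltramicorrespondence}, keeping careful track of the reparametrization factor. Throughout I take $X$ to be non-vanishing as a section of ${^b}TM$, which is the standing hypothesis under which the correspondence applies; since the $b$-metric $g$ is positive-definite this gives $\norm{X}^2>0$ on all of $M$. Put $\alpha:=X^\flat=g(X,\cdot)$, a $b$-contact form by Theorem~\ref{thm:bBeltramicorrespondence}, whose Reeb field is $R_\alpha=\tfrac{1}{\norm{X}^2}X$ by Remark~\ref{rem:bBeltramicorrespondence}.

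First I would write $\alpha$ in the splitting of Lemma~\ref{decomposition}. Using the adapted coordinates $(x,y,z)$ of the asymptotically exact metric~\eqref{eq:exactbmetric} and the expression~\eqref{Eq:bBelt} for $X$, lowering indices is a one-line computation and yields
\[
\alpha \;=\; X_z\,\frac{dz}{z}\;+\;(h_{11}X_x+h_{12}X_y)\,dx\;+\;(h_{12}X_x+h_{22}X_y)\,dy\,,
\]
so that in the notation of Proposition~\ref{prop:ReebHamdim3} one has $u=X_z$, with $\beta$ the smooth $1$-form given by the last two terms. Proposition~\ref{prop:ReebHamdim3} then says that $\Theta:=u\,d\beta+\beta\wedge du$ restricts to a symplectic form on $Z$ and that $R_\alpha|_Z$ is Hamiltonian for $\Theta|_Z$ with Hamiltonian function $-u|_Z=-X_z|_Z$, i.e.\ $\iota_{R_\alpha}\Theta|_Z=du|_Z$.

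It remains to transfer this from $R_\alpha$ to $X$. Since $X|_Z=f\cdot R_\alpha|_Z$ with $f:=\norm{X}^2|_Z>0$, set $\omega:=\tfrac{1}{f}\,\Theta|_Z$; then
\[
\iota_{X|_Z}\omega \;=\; \tfrac{1}{f}\,\iota_{fR_\alpha}\Theta|_Z \;=\; \iota_{R_\alpha}\Theta|_Z \;=\; du|_Z \;=\; d(X_z)|_Z\,,
\]
which, with the convention $\iota_{X_H}\omega=-dH$ of Proposition~\ref{prop:ReebHamdim3}, exhibits $X|_Z$ as the Hamiltonian vector field of $H=-X_z|_Z$ — the exceptional Hamiltonian. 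I do not expect a genuine obstacle here: once Proposition~\ref{prop:ReebHamdim3} is available the content is just the identification $u=X_z$ from the metric splitting. The only point that needs a moment's care is that the reparametrization factor $\norm{X}^2$ would, in higher dimension, destroy the closedness of the induced $2$-form; but $Z$ is two-dimensional, so every $2$-form on it is automatically closed, and hence $\omega$, being a nowhere-vanishing rescaling of the symplectic form $\Theta|_Z$, is itself symplectic. (One should also keep the Hamiltonian-vector-field sign convention consistent with Proposition~\ref{prop:ReebHamdim3} so that the Hamiltonian comes out as $-X_z$ and not $+X_z$.)
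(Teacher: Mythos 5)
Your argument is correct and follows essentially the same route as the paper: compute $\alpha=X^\flat$ in the adapted coordinates of the metric splitting, identify $u=X_z$, invoke Proposition~\ref{prop:ReebHamdim3} via the correspondence of Theorem~\ref{thm:bBeltramicorrespondence}, and absorb the reparametrization factor $\norm{X}^2$ into a rescaling of the symplectic form. Your extra remarks --- that the rescaled $2$-form stays closed because $\dim Z=2$, and the sign bookkeeping for the Hamiltonian convention --- only make explicit what the paper leaves implicit.
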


Let us illustrate this result with the following example of a $b$-Beltrami vector field on a globally flat $b$-manifold:
	
\begin{example}\label{ex:abcfield}
Consider $X=C\cos y \partial_x +B \sin x \partial_y + (C\sin y + B \cos x)z \partial_z$ on $\mathbb{T}^2\times \R$, which is a $b$-Beltrami vector field for the flat $b$-metric $g=\frac{dz^2}{z^2}+dx^2+dy^2$, $(x,y)\in\mathbb T^2$, $z\in \mathbb R$. By Lemma~\ref{L.excHam}, it is Hamiltonian on the critical set $Z=\{z=0\}$ and the exceptional Hamiltonian is given by $H= -C\sin y -B\cos x$. This is an eigenfunction of the Laplacian $\Delta=\partial^2_{xx}+\partial^2_{yy}$ on the flat $\mathbb{T}^2$ with eigenvalue $1$. We shall establish in Section~\ref{sec:mainresult} that this is, in fact, a general property of the exceptional Hamiltonians.
\end{example}

\subsection{Generic eigenfunctions of the Laplacian}\label{subsec:uhlenbeck}
	
Let $N$ be a compact manifold (without boundary) and denote by $\mathcal{G}^k$ the space of Riemannian metrics on $N$ of class $C^k$, $k\geq 2$. It is well known that $\mathcal G^k$ is a Banach manifold. Given a metric $h\in \mathcal G^k$, the Laplacian $\Delta_h$ (or Laplace-Beltrami operator) is a second-order elliptic operator with coefficients of class $C^{k-1}$. The corresponding eigenfuntions $u_k$ satisfy the equation
\[
-\Delta_hu_k=\lambda_k u_k\,,
\]
where $0=\lambda_0<\lambda_1\leq \lambda_2\leq \ldots$ are the eigenvalues. Standard regularity estimates imply that the eigenfunctions are of class $C^{k,\alpha}$ for all $\alpha<1$.
	
A landmark in spectral geometry is Uhlenbeck's work~\cite{uhlenbeck}, where she proved that there exists a residual set of metrics in $\mathcal G^k$ (i.e., a countable intersection of open and dense sets) whose corresponding eigenfunctions and eigenvalues satisfy several nondegeneracy properties. More precisely:

\begin{theorem}[\cite{uhlenbeck}]\label{thm:uhlenbeck}
For all $k\geq 2$, there exists a residual set $\widehat{\mathcal G^k}\subset \mathcal G^k$ of metrics such that the Laplacian $\Delta_h$ has the following properties provided that $h\in \widehat{\mathcal G^k}$:
\begin{enumerate}[label=(\Alph*)]
\item Its spectrum is simple, i.e., all the eigenvalues have multiplicity one.
\item The zero set of all the (nonconstant) eigenfunctions is regular.
\item All the (nonconstant) eigenfunctions are Morse.
\end{enumerate}
\end{theorem}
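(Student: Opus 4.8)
This is Uhlenbeck's theorem \cite{uhlenbeck}; the plan is to recall the structure of its proof, whose engine is the parametric transversality theorem (Sard--Smale density theorem) for $C^q$ Fredholm maps between Banach manifolds. Since $\mathcal G^k$ is a Baire space and a countable intersection of residual sets is again residual, it suffices to produce for each of (A), (B), (C) separately a residual set of metrics on which that property holds, each realized as a countable intersection of open dense sets.

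\textbf{Simple spectrum.} Set $\mathcal A_\Lambda\subset\mathcal G^k$ to be the metrics for which every eigenvalue $\lambda_j\le\Lambda$ has multiplicity one. Openness is standard perturbation theory; density is the point. Given $h$ with a double eigenvalue $\lambda$ and orthonormal eigenfunctions $u_1,u_2$, differentiate the eigenvalue equation $\Delta_hw+\lambda w=0$ along $h\mapsto h+t\,\delta h$: the derivative of $\Delta_h$ in the direction $\delta h$ is a second-order operator $L_{\delta h}$, linear in $\delta h$ and $d(\delta h)$, with principal part $\delta g^{ij}\,\partial^2_{ij}$, and Kato--Rellich perturbation theory identifies the first-order splitting of $\lambda$ with the eigenvalues of the symmetric matrix $\big(\langle L_{\delta h}u_i,u_j\rangle_{L^2}\big)_{i,j=1,2}$. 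It therefore suffices to find $\delta h$ with $\langle L_{\delta h}u_1,u_2\rangle\neq 0$, so that this matrix is not a multiple of the identity and the eigenvalue genuinely splits. Such a $\delta h$ exists because, by Aronszajn's unique continuation theorem, no eigenfunction vanishes on a nonempty open set, whence the symmetric $2$-tensor built from the first derivatives of $u_1$ and $u_2$ against which $\delta h$ is paired is not identically zero. Hence $\mathcal A_\Lambda$ is open and dense, and $\widehat{\mathcal A}:=\bigcap_{\Lambda\in\mathbb N}\mathcal A_\Lambda$ is a residual set of metrics with simple spectrum.

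\textbf{Regular zero set and Morse property.} Work over $\widehat{\mathcal A}$, where the eigenvalue $\lambda_j(h)$ and the $L^2$-normalized eigenfunction $u_j(h)$ (fixed up to sign) depend smoothly on $h$ and are of class $C^{k,\alpha}$ by elliptic regularity. Taking $k$ large enough that the $2$-jets of such functions vary in a $C^1$ fashion, consider the jet-evaluation maps on $\mathcal G^k\times N$,
\[
\Phi_j^{(1)}(h,p)=\big(u_j(h)(p),\,du_j(h)(p)\big),\qquad \Phi_j^{(2)}(h,p)=\big(du_j(h)(p),\,\nabla^2u_j(h)(p)\big),
\]
and the bad submanifolds $\Sigma^{(1)}=\{(0,0)\}$ and $\Sigma^{(2)}=\{0\}\times\{\text{degenerate symmetric forms}\}$, both of codimension $1+\dim N$. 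Then $u_j(h)$ has regular zero set iff $\Phi_j^{(1)}(h,\cdot)$ avoids $\Sigma^{(1)}$, and $u_j(h)$ is Morse iff $\Phi_j^{(2)}(h,\cdot)$ avoids $\Sigma^{(2)}$; since $\dim N<1+\dim N$, it is enough to show that $\Phi_j^{(1)}$ and $\Phi_j^{(2)}$ are transverse to the respective bad sets as maps on the product, for then the parametric transversality theorem yields a residual set of $h$ (the projection $\Phi^{-1}(\Sigma)\to\mathcal G^k$ being Fredholm of negative index). Intersecting over $j\in\mathbb N$ and with $\widehat{\mathcal A}$ gives the residual sets for (B) and (C).

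\textbf{The main obstacle.} The crux is the transversality of $\Phi_j^{(1)}$ and $\Phi_j^{(2)}$: one must show that perturbing the metric perturbs the jet of $u_j$ at a prescribed point $p$ in an arbitrary direction. Differentiating the eigenvalue equation and using simplicity of $\lambda_j$, the Fredholm alternative fixes $\dot\lambda_j=-\langle L_{\delta h}u_j,u_j\rangle$ and then determines the variation $\dot u_j=-(\Delta_h+\lambda_j)^{-1}\big(L_{\delta h}u_j+\dot\lambda_j u_j\big)$ (projected off $u_j$); one needs the induced $1$- or $2$-jet of $\dot u_j$ at $p$ to fill the relevant finite-dimensional space as $\delta h$ ranges over symmetric tensors supported near $p$. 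The difficulty is that $(\Delta_h+\lambda_j)^{-1}$ is nonlocal, so a source concentrated near $p$ produces a $\dot u_j$ spread over $N$; the resolution again rests on unique continuation (the full jet of $u_j$ degenerates only on a nowhere dense set, so near a generic $p$ the essentially free coefficients of $\delta h$ can be tuned to prescribe $L_{\delta h}u_j$ near $p$, hence the jet of $\dot u_j$ there, modulo the one-dimensional obstruction) together with a dimension count. The remaining points --- openness of $\mathcal A_\Lambda$, the Baire property of $\mathcal G^k$, the smooth dependence of simple eigendata on $h$, and the reduction from large $k$ to all $k\ge 2$ --- are routine; the complete argument is in \cite{uhlenbeck}.
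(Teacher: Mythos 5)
The paper offers no proof of this statement: it is quoted verbatim from Uhlenbeck's article and used as a black box, so there is nothing internal to compare your argument against. Your outline is a faithful reconstruction of Uhlenbeck's actual proof --- the reduction to a countable intersection of open dense sets, the first-order eigenvalue-splitting argument via the matrix $\bigl(\langle L_{\delta h}u_i,u_j\rangle\bigr)$ with density supplied by Aronszajn unique continuation, and the jet-transversality/Sard--Smale scheme for (B) and (C) with the codimension count $1+\dim N>\dim N$ --- and you correctly flag that the only nontrivial step is the surjectivity of the linearized problem (prescribing the jet of $\dot u_j$ despite the nonlocality of the resolvent), which is exactly what Uhlenbeck's abstract transversality theorems plus unique continuation are designed to handle. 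Since you, like the authors, ultimately defer the complete verification to \cite{uhlenbeck}, there is no gap to report.
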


This result will be key to the proof of the first main theorem (Theorem~\ref{thm:mainthmreformulation}).

\section{Generic existence of escape orbits}\label{sec:mainresult}

This section contains the proof of the first main theorem. More precisely, we prove the following:

\begin{theorem}[First main theorem]\label{thm:mainthm}
There exists a residual set $\widehat {\mathcal G_b^k}\subset \mathcal G^k_b$ such that any $b$-Beltrami vector field on $(M,Z,g)$, $g\in\widehat {\mathcal  G_b^k}$, which is not identically zero on $Z$, has a singular periodic orbit or at least $2+b_1(Z)$ escape orbits. Here, $b_1(Z)$ denotes the first Betti number of the critical surface $Z$.
\end{theorem}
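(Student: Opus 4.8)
The plan is to reduce the statement to Uhlenbeck's theorem (Theorem~\ref{thm:uhlenbeck}) on the critical surface. The crucial first step will be to show that the exceptional Hamiltonian $f:=-X_z|_Z$ of $X$ (Lemma~\ref{L.excHam}) is an eigenfunction of the Laplacian $\Delta_h$ on $(Z,h)$ with eigenvalue $\lambda^2$, where $h$ is the metric on $Z$ appearing in the splitting~\eqref{eq:exactbmetric}. To do this I would note that $\diver X=\tfrac1\lambda\diver\curl X=0$, so $\curl X=\lambda X$ also yields $dX^\flat=\lambda\star X^\flat$ and hence $\Delta_{\mathrm{Hodge}}X^\flat=\lambda^2X^\flat$; by Weitzenböck this is the vector Helmholtz equation $\nabla^*\nabla X^\flat+\mathrm{Ric}(X^\flat)=\lambda^2X^\flat$. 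In the neighbourhood $\mathcal N(Z)$ the asymptotically exact $b$-metric is the \emph{exact} product $\tfrac{dz^2}{z^2}+P^*h$, which in the cylindrical coordinate $t=\log|z|$ is $dt^2+P^*h$; thus the normal $b$-vector field is $z\partial_z=\pm\partial_t$ and $X_z$ is precisely the $\partial_t$-component of $X^\flat$. Since the $\partial_t$-factor is flat, the Ricci tensor has no $\partial_t$-component and $\nabla^*\nabla$ decouples in the $\partial_t$-direction, so the $\partial_t$-component of the Helmholtz equation is $\partial_t^2X_z+\Delta_hX_z+\lambda^2X_z=0$ on $\mathcal N(Z)$. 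Because $X$ is smooth on $M$, the transverse derivatives $\partial_tX_z=z\partial_zX_z$ and $\partial_t^2X_z=z\partial_z(z\partial_zX_z)$ vanish along $Z$, and restricting gives $-\Delta_hf=\lambda^2f$. As $\lambda\neq0$ the eigenvalue is positive, so $f$ is nonconstant unless $f\equiv0$; and $f\equiv0$ forces $X|_Z$ (the Hamiltonian vector field of $f$) and $X_z|_Z$ to vanish, i.e.\ $X\equiv0$ on $Z$, contrary to hypothesis. Hence $f$ is a nonconstant eigenfunction of $-\Delta_h$.

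Next I would set $\widehat{\mathcal G_b^k}\subset\mathcal G_b^k$ to be the class of asymptotically exact $b$-metrics whose induced metric $h$ on $Z$ lies in Uhlenbeck's residual set $\widehat{\mathcal G^k}$ for $Z$. Since, by Definition~\ref{def:bexactmetric}, the topology of $\mathcal G_b^k$ near $Z$ is exactly the $C^k$-topology of metrics on $Z$ transported by $P$, the map $g\mapsto h$ is a continuous open surjection, so $\widehat{\mathcal G_b^k}$ — the preimage of a residual set — is residual. For $g\in\widehat{\mathcal G_b^k}$, parts (B) and (C) of Theorem~\ref{thm:uhlenbeck} then give that $f$ is Morse and that $0$ is a regular value, so every critical point of $f$ is nondegenerate and lies off $f^{-1}(0)$. (For disconnected $Z$ the genericity and the Morse count below are organized componentwise, using spectral simplicity.)

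The last step is a local analysis near $Z$. In the coordinates of~\eqref{eq:exactbmetric} one has $X=X_x\partial_x+X_y\partial_y+zX_z\partial_z$ and, by Remark~\ref{rem:bBeltramicorrespondence}, $R_\alpha=\norm{X}^{-2}X$ with $\norm{X}^2=X_z^2+h(X_{xy},X_{xy})$, $X_{xy}:=X_x\partial_x+X_y\partial_y$. As an ordinary vector field on $M$, the factor $z$ kills the $\partial_z$-component of $R_\alpha$ along $Z$, so the zero set of $R_\alpha$ on $Z$ is exactly the critical set of $f$, and at such a point $\norm{X}^2=f^2\neq0$. The Morse inequalities on the closed surface $Z$ give at least $b_0(Z)+b_1(Z)+b_2(Z)=2+b_1(Z)$ critical points. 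Fix one, say $p$, with $c:=f(p)\neq0$. Near $(p,0)$ the $z$-equation of $X$ is $\dot z=zX_z(x,y,z)$ with $X_z(p,0)=-c$, so $\{z>0\}$ and $\{z<0\}$ are flow-invariant, and the linearization of $X$ at the zero $(p,0)$ is block-triangular: the Hamiltonian linearization of $f$ at $p$ on $T_pZ$ (eigenvalues $\pm\mu$ for a saddle, $\pm i\omega$ for a center) together with the transverse eigenvalue $-c$. In all cases the stable manifold of $(p,0)$ when $c>0$, respectively the unstable manifold when $c<0$, contains orbits that stay in $M\setminus Z$ and converge to $p$; since $\norm{X}^2\to c^2>0$ along them, passing from $X$ to $R_\alpha=\norm{X}^{-2}X$ is a harmless time change, and these are escape orbits accumulating on the zero $p$ of $R_\alpha$. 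If one of them also converges in the opposite time direction to a zero of $R_\alpha$ on $Z$, it is a singular periodic orbit and we are done; otherwise each escape orbit accumulates on a single critical point, distinct critical points carry distinct escape orbits, and we collect at least $2+b_1(Z)$ escape orbits.

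The main obstacle I anticipate is the first step: showing that the exceptional Hamiltonian is a bona fide eigenfunction of the \emph{induced} metric $h$ on $Z$. This is the bridge to Uhlenbeck's machinery, and it uses the \emph{exact} product form of an asymptotically exact $b$-metric near $Z$ in an essential way — in particular the continuity up to $Z$, with zero boundary values, of the transverse $b$-derivatives $z\partial_zX_z$ and $(z\partial_z)^2X_z$. The remaining points are more technical than hard: making the invariant-manifold analysis uniform enough that the escape orbits are complete, checking that distinct critical points yield distinct escape orbits when no singular periodic orbit exists, and organizing the Morse count over the (possibly several) components of $Z$.
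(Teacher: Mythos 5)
Your proposal is correct and follows essentially the same architecture as the paper's proof: the exceptional Hamiltonian $-X_z|_Z$ is identified as a $\lambda^2$-eigenfunction of $\Delta_h$, Uhlenbeck's theorem is pulled back through the splitting~\eqref{eq:exactbmetric} to get a residual set of $b$-metrics for which it is Morse with regular zero set, and the block-triangular linearization of $X$ at the critical points of $H$ (with transverse eigenvalue $-H(p)$) produces stable/unstable manifolds transverse to $Z$, hence escape orbits counted by the Morse inequalities. The only real divergence is in the spectral lemma, where you derive $-\Delta_h X_z=\lambda^2 X_z$ via the Hodge Laplacian and the Weitzenböck decoupling on the product cylinder $dt^2+P^*h$ (using that $z\partial_z X_z$ and $(z\partial_z)^2X_z$ vanish on $Z$), whereas the paper writes the equation $\lambda\,\iota_X\mu_g=d\alpha$ in coordinates and restricts to $z=0$; both are valid and give the same conclusion.
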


\begin{remark}
The term \emph{residual} means that $\widehat{\mathcal G_b^k}$ is a countable intersection of open and dense subsets of $\mathcal G_b^k$; in particular, $\widehat{\mathcal G_b^k}$ is dense in $\mathcal G_b^k$. This is what we mean by generic metric in Theorem~\ref{thm:mainthmreformulation}.
\end{remark}

We first establish a remarkable connection between $b$-Beltrami vector fields and spectral geometry on the critical surface $Z$. Specifically, we prove that
the exceptional Hamiltonian associated with a $b$-Beltrami vector field on an asymptotically exact $b$-manifold is an eigenfunction of the Laplacian on $Z$. In the language of $b$-contact geometry, this implies that the exceptional Hamiltonian for Melrose $b$-contact forms is an eigenfunction of the Laplacian. See Section~\ref{SS.bBelt} for the corresponding definitions. This connection will allow us to invoke the machinery of spectral theory (Uhlenbeck's theorem, in particular) to study generic dynamics in $b$-contact geometry.

\begin{proposition}\label{prop:eigenfunctionlaplacian}
Let $X$ be a $b$-Betrami field on an asymptotically exact $b$-manifold $(M,Z,g)$. Then the exceptional Hamiltonian is an eigenfunction of the Laplacian $\Delta_h$.
\end{proposition}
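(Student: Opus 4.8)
The plan is to write the $b$-Beltrami equation $\curl X=\lambda X$ explicitly in the coordinates $(x,y,z)$ adapted to the asymptotically exact metric \eqref{eq:exactbmetric}, and to show that, after restriction to $Z=\{z=0\}$, it forces the exceptional Hamiltonian $-X_z|_Z$ to solve $-\Delta_h u=\lambda^2u$. In the splitting $g=\frac{dz^2}{z^2}+h_{11}dx^2+2h_{12}\,dx\,dy+h_{22}\,dy^2$, with the $h_{ij}$ functions of $(x,y)$ only, the Beltrami equation reads $\lambda\,\iota_X\mu_g=d(X^\flat)$, where $\mu_g=\frac{dz}{z}\wedge\sqrt{\det h}\;dx\wedge dy$ is the $b$-volume form, $d$ is the $b$-differential, and, using \eqref{Eq:bBelt},
\[
X^\flat=(h_{11}X_x+h_{12}X_y)\,dx+(h_{12}X_x+h_{22}X_y)\,dy+X_z\,\tfrac{dz}{z}\,.
\]
Expanding both sides and matching the coefficients of $dx\wedge dy$, $\tfrac{dz}{z}\wedge dx$ and $\tfrac{dz}{z}\wedge dy$ produces three scalar equations on the tubular neighbourhood $\mathcal N(Z)$.

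First I would handle the equations coming from the $\tfrac{dz}{z}\wedge dx$ and $\tfrac{dz}{z}\wedge dy$ components: multiplying each by $z$ and setting $z=0$ — here one uses that $X_x,X_y,X_z$ and the $h_{ij}$ are smooth, so every term carrying an explicit factor $z$ vanishes on $Z$ — yields
\[
X_x|_Z=\frac{\partial_y X_z|_Z}{\lambda\sqrt{\det h}}\,,\qquad X_y|_Z=-\frac{\partial_x X_z|_Z}{\lambda\sqrt{\det h}}\,,
\]
which recovers the Hamiltonian character of $X|_Z$ from Lemma~\ref{L.excHam} (with exceptional Hamiltonian $-X_z|_Z$) and fixes the weight $1/(\lambda\sqrt{\det h})$. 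Substituting these into the remaining equation — the $dx\wedge dy$ component, $\lambda\sqrt{\det h}\,X_z=\partial_x(h_{12}X_x+h_{22}X_y)-\partial_y(h_{11}X_x+h_{12}X_y)$ — and restricting to $Z$, I set $u:=X_z|_Z$ and rewrite its right-hand side:
\[
\partial_x\!\Big[\tfrac{h_{12}\partial_y u-h_{22}\partial_x u}{\lambda\sqrt{\det h}}\Big]-\partial_y\!\Big[\tfrac{h_{11}\partial_y u-h_{12}\partial_x u}{\lambda\sqrt{\det h}}\Big]=-\frac{\sqrt{\det h}}{\lambda}\,\Delta_h u\,,
\]
where the last equality is the coordinate expression $\Delta_h u=\tfrac1{\sqrt{\det h}}\partial_i(\sqrt{\det h}\,h^{ij}\partial_j u)$ combined with $h^{11}=h_{22}/\det h$, $h^{22}=h_{11}/\det h$, $h^{12}=-h_{12}/\det h$ (valid on a surface) and the $z$-independence of $h$, which ensures that $\Delta_h$ is genuinely the Laplacian of $(Z,h)$. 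Comparing the two sides gives $\lambda\sqrt{\det h}\,u=-\tfrac{\sqrt{\det h}}{\lambda}\Delta_h u$, i.e. $-\Delta_h u=\lambda^2u$; since $-X_z|_Z=-u$ solves the same equation, the exceptional Hamiltonian is an eigenfunction of $\Delta_h$ with eigenvalue $\lambda^2$, and standard elliptic regularity then gives that it is a classical eigenfunction.

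I do not expect a serious obstacle: once the equations are set up this is essentially bookkeeping, and the globally flat model of Example~\ref{ex:abcfield}, where the eigenvalue is indeed $\lambda^2=1$, serves as a running check. The delicate points are (i) fixing the orientation of $\mu_g$ and the sign convention in the $b$-differential so that the three component equations carry the correct signs; and (ii) justifying that the $z$-weighted terms drop out upon restriction to $Z$, which is exactly where the hypothesis that $X$ is a genuine $b$-vector field (hence has smooth components) enters, together with the fact that restriction to $Z$ commutes with the tangential derivatives $\partial_x,\partial_y$, so $\partial_x(X_z|_Z)=(\partial_x X_z)|_Z$ and similarly for the $h_{ij}$. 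It is also worth noting that the precise form of an asymptotically exact $b$-metric — the normal block $dz^2/z^2$, no cross terms between $dz$ and $dx,dy$, and a $z$-independent $h$ — is exactly what makes $\Delta_h$ appear cleanly at the end, which is the structural reason for working with this class of metrics.
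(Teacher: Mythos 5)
Your proposal is correct and follows essentially the same route as the paper: write the $b$-Beltrami equation $\lambda\,\iota_X\mu_g=d(X^\flat)$ in the coordinates adapted to the splitting \eqref{eq:exactbmetric}, restrict the three component equations to $\{z=0\}$ (where the terms carrying an explicit factor of $z$ drop), solve the two $\tfrac{dz}{z}\wedge dx$, $\tfrac{dz}{z}\wedge dy$ equations for $X_x|_Z,X_y|_Z$ in terms of $\partial_yX_z,\partial_xX_z$, and substitute into the $dx\wedge dy$ equation to recognize the coordinate expression of $\Delta_h$, yielding $\Delta_hX_z=-\lambda^2X_z$. The signs and the eigenvalue $\lambda^2$ match the paper's computation, so there is nothing to add.
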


\begin{proof}

Let us consider the local coordinates $(x,y,z)\in Z\times (-\epsilon, \epsilon)$ introduced in Equation~\eqref{Eq:bBelt} to write a $b$-Beltrami vector field $X$. In these coordinates the $b$-metric $g$ reads in a tubular neighborhood of $Z$ as
\[
g=h(x,y)+\frac{dz^2}{z^2}
\]
where $h(x,y)=h_1dx^2+h_2dy^2+2h_{12}dxdy$ is a Riemannian metric on $Z$, cf. Equation~\eqref{eq:exactbmetric}. According to Lemma~\ref{L.excHam}, the exceptional Hamiltonian is given by $-X_z$ on $Z$, i.e., in these coordinates $-X_z(x,y,0)$, so let us compute the Laplacian $\Delta_h X_z(x,y,0)$.

To this end, we notice that the induced $b$-volume form is given by $\mu_g=\sqrt{\det h}\,dx\wedge dy \wedge \frac{dz}{z}$, and the $b$-form $\alpha$ dual to $X$ computed with the $b$-metric is:
\[
\alpha=\Big(h_{11}X_x+h_{12}X_y\Big)dx+\Big(h_{12}X_x+h_{22}X_y\Big)dy+\frac{X_z}{z}\,dz\,.
\]
In terms of $\alpha$, the $b$-Beltrami equation has the expression $\lambda \iota_X \mu_g=d\alpha$, which reads in coordinates as

\begin{equation}\label{eq:beltramisplitmetric}
\begin{cases}  \lambda \sqrt{\det h} X_z=-\partial_y(h_{11}X_x+h_{12}X_y)+\partial_x(h_{12}X_x+h_{22}X_y)\,, \\ -\lambda \sqrt{\det h} X_y=\partial_xX_z-z\partial_z(h_{11}X_x+h_{12}X_y)\,, \\ \lambda \sqrt{\det h}X_x=\partial_yX_z-z\partial_z(h_{12}X_x+h_{22}X_y)\,.\end{cases}
\end{equation}

Therefore, restricting on the critical set $\{z=0\}$ we obtain

\begin{equation}\label{eq:beltramisplitmetric}
\begin{cases}  \lambda \sqrt{\det h} X_z=-\partial_y(h_{11}X_x+h_{12}X_y)+\partial_x(h_{22}X_y+h_{12}X_x) \\ -\lambda \sqrt{\det h} X_y=\partial_xX_z \\ \lambda \sqrt{\det h}X_x=\partial_yX_z\,,\end{cases}
\end{equation}
where all the function $X_k$ are evaluated at $(x,y,0)$. Finally, noticing that the Laplacian $\Delta_h$ on $Z$ is given in local coordinates $(x,y)$ by
$$\Delta_h=\frac{1}{\sqrt{\det h}} \bigg[ \frac{\partial }{\partial x}\bigg(\frac{1}{\sqrt{\det h}}h_{22} \partial_x-\frac{1}{\sqrt{\det h}}h_{12} \partial_y\bigg)+\partial_y\bigg(\frac{1}{\sqrt{\det h}}h_{11} \partial_y-\frac{1}{\sqrt{\det h}}h_{12} \partial_x\bigg) \bigg]\,,$$
it readily follows from the system of equations~\eqref{eq:beltramisplitmetric} that
$$\Delta_hX_z=-\lambda^2X_z\,,$$
thus proving that the exceptional Hamiltonian $-X_z$ is an eigenfunction of the Laplacian $\Delta_h$ with eigenvalue $\lambda^2$. The proposition then follows.
\end{proof}

As reviewed in Subsection~\ref{subsec:uhlenbeck}, Uhlenbeck's theory characterizes the critical set of the eigenfunctions of the Laplacian for a generic set of metrics (i.e., a residual set), cf. Theorem~\ref{thm:uhlenbeck}. This result is very useful when studying asymptotically exact $b$-metrics because the $b$-metric $g$ is written (in a neighborhood of $Z$) in terms of a Riemannian metric $h$ on the critical surface. Accordingly, a residual set of metrics $\widehat{\mathcal G^k}$ on $Z$ endowed with the $C^k$-topology obviously defines a set of $b$-metrics $\widehat{\mathcal G_b^k}$ on $(M,Z)$ via the splitting~\eqref{eq:exactbmetric}, which is also residual with the $C^k$-topology in the set of all asymptotically exact $b$-metrics $\mathcal G_b^k$.

In view of Definition~\ref{def:bMelrose}, by generic Melrose $b$-contact form, we mean the $b$-contact form associated with a nonvanishing $b$-Beltrami vector field on $(M,Z,g)$, with $g\in \widehat{\mathcal G_b^k}$. As a corollary of Proposition~\ref{prop:eigenfunctionlaplacian} and Theorem~\ref{thm:uhlenbeck}, we then obtain the following:

\begin{corollary}\label{thm:genericbBeltrami}
The (nonconstant) exceptional Hamiltonian associated with a generic $b$-Beltrami vector field or a generic Melrose $b$-contact form is a Morse function on $Z$ and its zero set is regular.
\end{corollary}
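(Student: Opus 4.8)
The plan is to read the statement off Proposition~\ref{prop:eigenfunctionlaplacian} and Uhlenbeck's Theorem~\ref{thm:uhlenbeck}: the first identifies the exceptional Hamiltonian with an eigenfunction of the Laplacian on $Z$, and the second controls such eigenfunctions for a residual set of metrics. Only two small points require an argument along the way — that the eigenfunction at hand is genuinely nonconstant, and that Uhlenbeck's genericity on $Z$ transfers to a genericity statement for asymptotically exact $b$-metrics on $(M,Z)$.

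First I would fix an asymptotically exact $b$-metric written near $Z$ as $g=\frac{dz^2}{z^2}+P^*h$, with $h\in\mathcal G^k$ a Riemannian metric on $Z$ as in~\eqref{eq:exactbmetric}, together with a $b$-Beltrami field $X$ of proportionality factor $\lambda\neq0$. By Lemma~\ref{L.excHam} the exceptional Hamiltonian is $-X_z|_Z$, and by Proposition~\ref{prop:eigenfunctionlaplacian} it satisfies $\Delta_hX_z=-\lambda^2X_z$ on $Z$, i.e.\ it is an eigenfunction of $\Delta_h$ with eigenvalue $\lambda^2$. If $X$ is not identically zero on $Z$ — in particular if $X$ is nonvanishing as a section of ${}^bTM$, the situation relevant to Melrose $b$-contact forms (Definition~\ref{def:bMelrose}) — then this eigenfunction is \emph{nonzero}: the second and third equations of the restricted $b$-Beltrami system~\eqref{eq:beltramisplitmetric} give $\lambda\sqrt{\det h}\,X_x=\partial_yX_z$ and $-\lambda\sqrt{\det h}\,X_y=\partial_xX_z$ on $Z$, so $X_z|_Z\equiv0$ would force $X_x|_Z=X_y|_Z=0$, that is $X|_Z\equiv0$. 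A nonzero eigenfunction with eigenvalue $\lambda^2>0$ is $L^2(Z,h)$-orthogonal to $\ker\Delta_h$, hence is nonconstant on each component of $Z$ on which it does not vanish.

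Next I would invoke Theorem~\ref{thm:uhlenbeck}: there is a residual set $\widehat{\mathcal G^k}\subset\mathcal G^k$ of metrics on $Z$ (applied to each component of $Z$ separately when $Z$ is disconnected) for which every nonconstant eigenfunction of the Laplacian is Morse and has regular zero set. By Definition~\ref{def:bexactmetric}, the $C^k$ topology on $\mathcal G_b^k$ is, in a neighbourhood of $Z$, precisely the topology inherited via $P$ from the $C^k$ topology of Riemannian metrics on $Z$; hence $g\mapsto h$ is a continuous open surjection $\mathcal G_b^k\to\mathcal G^k$, and the preimage $\widehat{\mathcal G_b^k}$ of $\widehat{\mathcal G^k}$ is residual in $\mathcal G_b^k$ — this is exactly the set singled out in the discussion preceding the statement. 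For $g\in\widehat{\mathcal G_b^k}$ the associated $h$ lies in $\widehat{\mathcal G^k}$, so the nonconstant eigenfunction $-X_z|_Z$ of the previous step is Morse with regular zero set; the same argument applies word for word to the exceptional Hamiltonian of a generic Melrose $b$-contact form. I do not expect a serious obstacle here: every step but one is a verbatim citation, and the remaining step — that $g\mapsto h$ pulls residual sets back to residual sets — is immediate from the definition of the topology on $\mathcal G_b^k$ and is already asserted in the paragraph before the corollary.
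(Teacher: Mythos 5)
Your proposal is correct and follows essentially the same route as the paper: the corollary is obtained there directly by combining Proposition~\ref{prop:eigenfunctionlaplacian} (the exceptional Hamiltonian $-X_z|_Z$ is a $\lambda^2$-eigenfunction of $\Delta_h$) with Uhlenbeck's Theorem~\ref{thm:uhlenbeck}, transferring the residual set $\widehat{\mathcal G^k}$ of metrics on $Z$ to a residual set $\widehat{\mathcal G_b^k}$ of asymptotically exact $b$-metrics via the splitting~\eqref{eq:exactbmetric}. Your extra observations — that $X_z|_Z\equiv 0$ would force $X|_Z\equiv 0$ by the restricted Beltrami system, and that a nonzero eigenfunction with positive eigenvalue is orthogonal to constants — are correct refinements of details the paper leaves implicit.
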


We are now ready to prove Theorem~\ref{thm:mainthm}. We recall that the first Betti number of a compact surface $Z$ is given by $b_1(Z)=2\nu(Z)$, where $\nu(Z)$ is the genus of the surface. In the proof it is key to use that the exceptional Hamiltonian is a Morse function with regular zero set to prove the existence of stable or unstable directions which, in turn, correspond to escape orbits. The role of $b_1(Z)$ is that it bounds from below the number of critical points of a Morse function on $Z$.

\begin{proof}[Proof of Theorem~\ref{thm:mainthm}]
Let $X$ be a $b$-Beltrami vector field on an asymptotically exact $b$-manifold $(M,Z,g)$. In what follows we use the same local coordinates $(x,y,z)$ introduced above. Since $X|_Z$ is tangent to the critical surface $Z$ and Hamiltonian with Hamiltonian function given by $H(x,y):=-X_z(x,y,0)$, the zeros of $X$ on $Z$ are given by the critical points of $H$. Now let us analyze the linear stability of these zeros. To this end, we compute the Jacobian matrix $DX$ at a critical point $p=(x_0,y_0,0)$ of $H$. Using Equations~\eqref{eq:beltramisplitmetric}, a straightforward computation shows that
\begin{equation}\label{eq:JacobianBeltrami}
DX(p)=\frac{1}{\lambda \sqrt{\det h}}\begin{pmatrix}
-\partial^2_{xy}H & -\partial^2_{yy}H & * \\
\partial^2_{xx}H & \partial^2_{xy}H & *\\
0 & 0 &  -\lambda\sqrt{\det h}\,H
\end{pmatrix}\Bigg|_{p}.
\end{equation}
In the two first rows, we used that $\partial_xH(p)=\partial_yH(p)=0$ because $p$ is a critical point of $H$. For example, to compute the first entry of the matrix we write
$$\partial_x X_x(p)=-\partial_x\bigg( \frac{1}{\lambda \sqrt{ \det h}} \partial_y H\bigg)\bigg|_{p}=-\partial_x\bigg(\frac{1}{\lambda \sqrt{\det h}} \bigg)\partial_y H\bigg|_{p}-\frac{1}{\lambda \sqrt{\det h}}{\partial^2_{xy} H}\bigg|_{p}\,,$$
which at $p$ therefore gives $-\frac{1}{\lambda \sqrt{\det h}}\partial^2_{xy}H(p)$.
In the last row, we used that the computation is carried out on $Z$ and that $\partial_z (zH)|_Z=H$.

If $X|_Z$ is not identically zero, and the asymptotically exact $b$-metric is generic in the sense that $g\in \widehat{\mathcal G_b^k}$, Corollary~\ref{thm:genericbBeltrami} implies that $H$ is a Morse function with regular zero set. Therefore, we deduce from Morse inequalities that there are at least $2+b_1(Z)$ critical points $p_k$. It also follows that the matrix $DX(p_k)$ is non-singular at each critical point $p_k$: indeed the determinant of the matrix in Equation~\eqref{eq:JacobianBeltrami} is given by
$$\det DX(p_k)=\frac{-1}{\lambda \det h(p_k)} \Hess H (p_k) H(p_k) \neq 0\,,$$
where $\Hess$ is the Hessian determinant, and we have used that $\Hess H(p_k)\neq 0 \neq H(p_k)$ for all the critical points $p_k$. Moreover, $DX(p_k)$ has three nonzero eigenvalues $\lambda_x,\lambda_y,\lambda_z$, such that $\lambda_x$ and $\lambda_y$ are the eigenvalues of the $2\times 2$ matrix
\begin{equation}\label{eq:JacobianBeltrami}
\frac{1}{\lambda \sqrt{\det h}}\begin{pmatrix}
-\partial^2_{xy}H & -\partial^2_{yy}\\
\partial^2_{xx}H & \partial^2_{xy}H
\end{pmatrix}\Bigg|_{p_k}\,,
\end{equation}
and $\lambda_z=-H(p_k)$. Now there are four cases to discuss:
\begin{enumerate}
\item $H(p_k)>0$ and $p_k$ is a saddle point. In this case, the $b$-Beltrami vector field $X$ has a $2$-dimensional stable manifold at $p_k$ that is transverse to $Z$.
\item $H(p_k)<0$ and $p_k$ is a saddle point. In this case, the $b$-Beltrami vector field $X$ has a $2$-dimensional unstable manifold at $p_k$ that is transverse to $Z$.
\item $H(p_k)>0$ and $p_k$ is a local maximum or minimum. In this case, the $b$-Beltrami vector field $X$ has a $1$-dimensional stable manifold at $p_k$ that is transverse to $Z$.
\item $H(p_k)<0$ and $p_k$ is a local maximum or minimum. In this case, the $b$-Beltrami vector field $X$ has a $1$-dimensional unstable manifold at $p_k$ that is transverse to $Z$.
\end{enumerate}
In all these cases, which follow from an easy application of the invariant manifold theorem, the vector field $X$ has transverse invariant manifolds of dimension $1$ or $2$ whose $\omega$- or $\alpha$-limit are a point $p_k\in Z$. In particular, there exists at least one escape orbit for each $p_k$. Finally, if two escape orbits corresponding to different points $p_{k_1}\neq p_{k_2}$ coincide, this gives, by definition, a singular periodic orbit, cf. Definition~\ref{def:spo}. If escape orbits do not coincide, there are at least as many as critical points $p_k$ on $Z$, a number that we know it is lower bounded by $2+b_1(Z)$. The theorem then follows.
\end{proof}

\begin{remark}
In cases $(1)$ and $(2)$ discussed above, the set of escape orbits is actually $2$-dimensional. At least one of these cases occur whenever the critical surface $Z$ has positive genus.
\end{remark}
\begin{remark}\label{rmk:generalcase}
Theorem~\ref{thm:mainthm} is not enough to prove the singular Weinstein conjecture for Melrose $b$-contact forms. The key is that we do not prove that the stable (respectively unstable) manifold of the equilibrium point $p_{k_1}$ intersects with the unstable (respectively stable) manifold of another equilibrium point $p_{k_2}$. We observe that for more general $b$-metrics (that are not exact), the exceptional Hamiltonian is an eigenfunction of a more complicated elliptic operator (not a Laplacian). We were not able to show the genericity properties à la Uhlenbeck for eigenfunctions of those operators.
\end{remark}

\begin{figure}[hbt!]
\begin{center}
\begin{tikzpicture}[scale=0.75]
\draw[rotate=45,color=red,dashed] (4,-0.5) ellipse (1cm and 0.5cm);
\draw[red](1.8,2) node {$Z_1$};
 \draw[dashed,color=red] (-3,0) --  (6,0);
 \draw[red] (5.2,-0.5) node {$Z_2$};
 \draw (0.6,1) node {$\gamma_1$};
 \fill[green] (0,0) circle[radius=3pt];
 \draw (0,0) ..controls +(0.2,2) and +(-0.5,-0.3).. (2.5,3.2);
 \draw (2.5,3.2) ..controls +(0.4,0.2) and +(-0.25,0.25).. (4,3.5);
 \draw (4,3.5) ..controls +(0.4,-0.4) and +(0.2,0.2).. (3.75,2);
 \draw (3.75,2) ..controls +(-0.3,-0.3) and +(0.4,-0.35).. (2.25,1.55);
 \draw (2.25,1.55) ..controls +(-0.3,0.6) and +(-0.5,-0.35).. (3,3.2);
 \draw[dashed] (3,3.2) ..controls +(0.25,0.15) and +(-0.3,0.35).. (4.03,3.2);
 \fill [shift={(0.68,1.7)},scale=0.1,rotate=50]   (0,0) -- (-1,-0.7) -- (-1,0.7) -- cycle;
  \draw[dashed,color=red] (-3,-2.5) --  (6,-2.5);
  \draw[red] (5.2,-2.2) node {$Z_3$};
    \draw[dashed,color=red] (-3,-4.5) --  (6,-4.5);     \fill[green] (-1,-2.5) circle[radius=3pt];
    \fill[green] (-2,0) circle[radius=3pt];
     \draw (-2,0) ..controls +(-0.3,-0.6) and +(-0.1,1).. (-0.5,-1);
    \draw (-0.5,-1) ..controls +(0,-1) and +(-0.1,0.35).. (-1,-2.5);
    \fill [shift={(-0.5,-1.25)},scale=0.1,rotate=80]   (0,0) -- (-1,-0.7) -- (-1,0.7) -- cycle;
  \draw[red] (5.2,-4.2) node {$Z_4$};
  \fill[green] (1,-4.5) circle[radius=3pt];
    \fill[green] (3,-4.5) circle[radius=3pt];
     \draw (1,-4.5) ..controls +(-0.3,-0.6) and +(-0.5,0.5).. (2,-3.8);
    \draw (2,-3.8) ..controls +(0.5,-0.5) and +(-0.1,0.35).. (3,-4.5);
     \draw (-0.2,-2) node {$\gamma_2$};
     \draw (1,-3.8) node {$\gamma_3$};
    \fill [shift={(1.8,-3.72)},scale=0.1,rotate=0]   (0,0) -- (-1,-0.7) -- (-1,0.7) -- cycle;
\end{tikzpicture}
\caption{Different types of escape and singular periodic orbits: $\gamma_1$ is a generalized singular periodic orbit, $\gamma_2,\gamma_3$ are singular periodic orbits}
\label{fig:singularvsescape}
\end{center}
\end{figure}
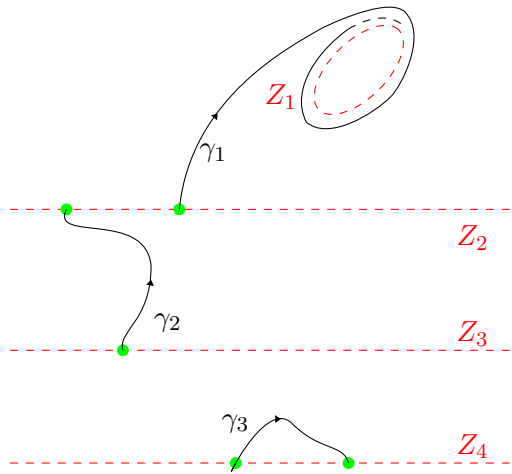

\section{Asymptotically flat $b$-metrics}\label{subsec:flat}

This section contains the proof of the second main theorem. More precisely, we prove the following:

\begin{theorem}[Second main theorem]\label{thm:mainthm2}
A generic asymptotically symmetric $b$-Beltrami vector field $X$ on an asymptotically flat $b$-manifold of dimension $3$ has a generalized singular periodic orbit. Moreover, it has a singular periodic orbit or at least $4$ escape orbits.
\end{theorem}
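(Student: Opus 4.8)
The plan is to prove the two assertions of Theorem~\ref{thm:mainthm2} separately, recycling the machinery of Section~\ref{sec:mainresult} for the second one and adding a volume-theoretic ingredient for the first.

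\emph{The four escape orbits.} Since an asymptotically flat $b$-metric is in particular asymptotically exact, Proposition~\ref{prop:eigenfunctionlaplacian} applies and the exceptional Hamiltonian $H:=-X_z|_Z$ of Lemma~\ref{L.excHam} is an eigenfunction of the Laplacian $\Delta_h$ on $Z$ with eigenvalue $\lambda^2>0$; but here each component of $Z$ is a flat torus and the metric $h$ on $Z$ is fixed, so Uhlenbeck's theorem is unavailable and Corollary~\ref{thm:genericbBeltrami} must be replaced. Instead I would invoke Lemma~\ref{L:genflat}: for $X$ in an open dense set of asymptotically symmetric $b$-Beltrami fields (in the $C^k$ topology of vector fields), $H$ is a Morse function on $Z$ with $0$ as a regular value. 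From here the argument is verbatim that of the proof of Theorem~\ref{thm:mainthm}: the zeros of $X$ on $Z$ are the critical points of $H$, of which there are at least $2+b_1(\mathbb{T}^2)=4$ (one torus) by the Morse inequalities; at each such $p_k$ the Jacobian~\eqref{eq:JacobianBeltrami} is nonsingular because $\Hess H(p_k)\neq 0$ and $H(p_k)\neq 0$ (the latter since $0$ is a regular value), and it has a stable or unstable eigendirection transverse to $Z$ with eigenvalue $-H(p_k)$; by the invariant manifold theorem there is at least one escape orbit through each $p_k$, and if two of them coincide we obtain a singular periodic orbit, otherwise at least four distinct escape orbits.

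\emph{The generalized singular periodic orbit.} For the first assertion I would single out the global minimum of $H$. Since $H$ is a nonconstant eigenfunction (generically $X|_Z\not\equiv 0$, and a nonzero eigenfunction of positive eigenvalue is nonconstant), it is $L^2(Z)$-orthogonal to the constants, so $\int_Z H=0$ and $H$ attains a strictly negative minimum at some $p\in Z$, which for generic $X$ is a nondegenerate minimum lying off the zero set. By case $(4)$ in the proof of Theorem~\ref{thm:mainthm}, the $2\times 2$ block of $DX(p)$ in~\eqref{eq:JacobianBeltrami} has purely imaginary eigenvalues while the transverse eigenvalue equals $-H(p)>0$; hence $W^u(p)$ is the $1$-dimensional curve transverse to $Z$, and each of its two branches is an orbit $\gamma\subset M\setminus Z$ with $\lim_{t\to-\infty}\gamma(t)=p$, so $\alpha(\gamma)=\{p\}\subset Z$. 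It then remains to show that, for at least one such branch, $\omega(\gamma)\cap Z\neq\emptyset$, i.e.\ that $\gamma|_{[0,\infty)}$ is not precompact in $M\setminus Z$; by Definition~\ref{def:gspo} this makes $\gamma$ a generalized singular periodic orbit. Here I would use that $X$ preserves the smooth volume form $\mu_g=\sqrt{\det h}\,dx\wedge dy\wedge\frac{dz}{z}$ on $M\setminus Z$ together with the near-$Z$ normal form: in the cylindrical coordinate $\tau=-\log|z|$ the flow near each end is, up to an error exponentially small in $|z|$, the product of the Hamiltonian flow of $H$ on $Z$ with the drift $\dot\tau=H$, so the end is attracting over $\{H>0\}$ and repelling over $\{H<0\}$; moreover, since $d\alpha=\lambda\,\iota_X\mu_g$ restricts to an exact $2$-form on each torus $\{z=c\}$, Stokes' theorem gives that the flux of $X$ through $\{z=c\}$ vanishes. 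The combination of zero flux and incompressibility should prevent $\gamma$ from spending all forward time in a compact part of $M\setminus Z$ confined to the repelling zones, forcing it either to escape cleanly through some end (so $\omega(\gamma)\subset Z$, which is a singular periodic orbit when $\omega(\gamma)$ reduces to a point) or to re-enter a neighbourhood of $Z$ with $|z|\to 0$ along a subsequence of times (so $\omega(\gamma)\cap Z\neq\emptyset$); compare Example~\ref{ex:gspo}.

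\emph{Main obstacle.} The delicate point is precisely the last one: ruling out a forward orbit that remains forever inside a compact subset of $M\setminus Z$. The near-end model is only asymptotically a suspension, the attracting/repelling dichotomy degenerates on $\{H=0\}$, and $X$ may have zeros in the bulk; so the volume argument must be implemented carefully, presumably via a quantitative flux balance across the cross-sections $\{|z|=c\}$ near each end (using $\int_Z H=0$) showing that the outflow born at $Z$ is matched by an equal inflow returning to $Z$, and then extracting the desired orbit, possibly as a limit of a family rather than a single branch of $W^u(p)$. I expect this incompressibility-plus-normal-form step to be the technical core of the proof.
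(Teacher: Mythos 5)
Your treatment of the escape orbits is essentially the paper's: Lemma~\ref{L:genflat} replaces Uhlenbeck's theorem, and the local analysis at the critical points of $H$ is repeated verbatim with $b_1(\mathbb T^2)=2$. The only thing you elide is how an open and dense set of \emph{eigenfunctions} becomes an open and dense set of \emph{vector fields}: perturbing $X_z$ alone destroys the Beltrami property away from $Z$, so the paper perturbs the symmetric model field in $\mathcal N(Z)$, extends its dual $1$-form to a $b$-contact form on all of $M$ by a cutoff interpolation lemma, and then invokes Theorem~\ref{thm:bBeltramicorrespondence} to recover a nearby $b$-Beltrami field for a nearby asymptotically flat $b$-metric. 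This is a real, if routine, step rather than a one-line citation.

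For the generalized singular periodic orbit your plan has exactly the gap you flag, and it is fatal as stated: following a single branch of the one-dimensional $W^u(p)$ at the minimum of $H$ and trying to force its $\omega$-limit into $Z$ by flux balance cannot work, because a single orbit has measure zero, so volume preservation says nothing about it; the branch could perfectly well accumulate on a periodic orbit or another invariant set in the bulk, and the vanishing of the flux of $X$ through the tori $\{z=c\}$ does not exclude this. The paper's fix is to apply incompressibility to a set of \emph{positive measure}, placed at the opposite extremum: at the point $p_0$ where $X_z$ attains its (negative) minimum one has $\dot z=zX_z\le 0$ for $z\ge0$, and since $X_z$ is a first integral of the symmetric field whose level sets near the nondegenerate extremum are cylinders over small circles, the whole compact block $K=\{(x,y,z)\in\tilde B:\ \delta/2\le z\le\delta\}$ is forward-absorbed into the disk $D=B\cap\{z=0\}$. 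Hence $\varphi_t(K)\cap K=\emptyset$ for $t\ge T$, the backward images $\varphi_{-nT}(K)$ are pairwise disjoint with equal positive $b$-volume, and the complement of any neighborhood of $Z$ has finite $b$-volume; therefore there exist $p_k\in K$ and $t_k\to-\infty$ with $\varphi_{t_k}(p_k)\to Z$, and a limit point $\widetilde p$ of $(p_k)$ carries an orbit whose $\omega$-limit lies in $D$ and whose $\alpha$-limit meets $Z$. This is precisely the ``limit of a family'' you anticipated; the missing idea is to run the volume argument on a forward-attracting block of positive measure near the extremum of the first integral $X_z$, not on the unstable manifold of a single equilibrium.
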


A particularly relevant model of $b$-manifold is the flat one, which is the $b$-analogue of the flat torus in Riemannian geometry. It is then natural to analyze the dynamics of $b$-Beltrami vector fields on asymptotically flat $b$-manifolds, as e.g. in Example~\ref{ex:T3flat} where the ABC flows in the $b$-context are introduced. We will consider the following definition:

\begin{definition}\label{def_flat}
An asymptotically flat $b$-manifold $(M,Z,g)$ of dimension $3$ is a manifold whose critical surface $Z$ consists of several (disjoint) copies of $\mathbb T^2$, endowed with a $b$-metric which has the following expression in a neighborhood $\mathcal N(Z)$ of $Z$:
\[
g=\frac{dz^2}{z^2}+dx^2+dy^2
\]
$z\in(-\epsilon,\epsilon)$ and $(x,y)\in\mathbb T^2=(\mathbb R/2\pi\mathbb Z)^2$. The manifold $M$ is globally $b$-flat if $M=\mathbb{T}^3$ and $g=dx^2+dy^2+\frac{dz^2}{sin^2 z}$.
\end{definition}

\begin{example}
An example of a globally flat $b$-metric is given in Example~\ref{ex:T3flat}. The $ABC$ $b$-Beltrami vector fields studied in this example (and in Example~\ref{ex:gspo}) exhibit both singular periodic orbits and generalized singular periodic orbits.
\end{example}

Our goal in this section is to prove the existence of escape orbits (or more generally, of generalized singular periodic orbits as introduced in Definition~\ref{def:gspo}) for $b$-Beltrami vector fields on an asymptotically flat manifold. Contrary to Example \ref{ex:T3flat}, where the existence of singular periodic orbits is proved for all values $B,C$ ($|B|\neq|C|$), we will only address the singular Weinstein conjecture for \emph{generic} $b$-Beltrami fields on asymptotically flat $b$-manifolds.

Since now the $b$-metric is fixed in $\mathcal N(Z)$, the notion of genericity is different from the one we considered in Theorem~\ref{thm:mainthm}. Specifically, generic will refer to an open and dense set of the space of $b$-Beltrami vector fields, which is endowed with the $C^k$ topology. To establish this genericity result we will focus on asymptotically symmetric $b$-Beltrami vector fields, which is a natural class to consider in view of Example~\ref{ex:abcfield}:

\begin{definition}\label{def:asymptoticallysymmetric}
Let $X$ be a $b$-Beltrami vector field on an asymptotically flat $b$-manifold $(M,Z,g)$. We say that it is asymptotically symmetric if it commutes with {the normal $b$-vector field in a neighborhood of the critical surface $Z$.}
\end{definition}

\begin{remark}
{As mentioned in the preliminaries, for a fixed defining function $z$ of the critical surface $Z$, the normal $b$-vector field is given by $z\partial_z$, hence Definition \ref{def:asymptoticallysymmetric} asks that locally around $Z$, the vector field $X$ commutes with $z\partial_z$, i.e. $[X,z\partial_z]=0$.}
\end{remark}

Notice that the $b$-Beltrami vector fields considered in Examples~\ref{ex:T3flat} and~\ref{ex:abcfield} are asymptotically symmetric (in fact, they are globally symmetric).

\subsection{A preliminary spectral lemma}

Our strategy of proof of Theorem~\ref{thm:mainthm2} also employs that the exceptional Hamiltonian is an eigenfunction of the Laplacian on the critical surface. It will then be convenient to establish a genericity result in this setting. More precisely, consider the torus $\mathbb{T}^2=(\R/(2\pi \mathbb{Z})^2$ endowed with the flat metric. It is standard that the spectrum of the Laplacian for the flat metric is given by
$$\{\mu_k=|k|^2: (k_1,k_2)\in \mathbb{Z}^2\}$$
and the multiplicity of each eigenvalue is at least $4$ (indeed $(k_1,k_2)$, $(-k_1,k_2)$, $(k_1,-k_2)$ and $(-k_1,-k_2)$ correspond to the same eigenvalue). Let us denote by $\mathcal{E}_\mu$ the eigenspace of the Laplacian with eigenvalue $\mu$.

Our goal is to prove that the eigenfunctions are generically Morse. To prove this, we will use the following parametric transversality theorem:

\begin{theorem}[Parametric transversality theorem, Theorem~6.35 in~\cite{lee}]
Let $N$ and $M$ be two smooth manifolds, $X\subset M$ an embedded submanifold, and $\{F_s\}_{s\in S}$ a smooth family of maps $F_s:N\to M$ with $S$ a smooth manifold of parameters. If the map $F:N\times S \to M$, $F(\cdot,s) :=F_s(\cdot)$ is transverse to $X$, then for almost every $s\in S$ (in the measure-theoretic sense), the map $F_s:N\to M$ is transverse to $X$.
\end{theorem}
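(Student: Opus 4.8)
The plan is to reduce the assertion to Sard's theorem applied to a well-chosen projection. First I would set $W:=F^{-1}(X)\subset N\times S$. Since $X$ is an embedded submanifold of $M$ and the whole family $F$ is transverse to $X$, the preimage theorem guarantees that $W$ is an embedded submanifold of $N\times S$ of codimension equal to $\operatorname{codim}_M X$, with tangent space $T_{(n,s)}W=(dF_{(n,s)})^{-1}(T_{F(n,s)}X)$ at each $(n,s)\in W$. Consider the restriction to $W$ of the projection $\pi\colon N\times S\to S$, $\pi(n,s)=s$. The crux of the proof is the claim that, for fixed $s\in S$, the map $F_s\colon N\to M$ is transverse to $X$ if and only if $s$ is a regular value of $\pi|_W$.

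To prove the claim, fix $(n,s)\in W$, write $x:=F(n,s)\in X$, $L:=dF_{(n,s)}\colon T_nN\times T_sS\to T_xM$, and let $q\colon T_xM\to T_xM/T_xX$ be the quotient map. Transversality of $F$ at $(n,s)$ says exactly that $q\circ L$ is surjective, and then $T_{(n,s)}W=\ker(q\circ L)$. Since $d(F_s)_n(v)=L(v,0)$, the map $F_s$ is transverse to $X$ at $n$ precisely when $q\circ L|_{T_nN\times\{0\}}$ is surjective, while $d(\pi|_W)_{(n,s)}$ is the projection $\ker(q\circ L)\to T_sS$, $(v,w)\mapsto w$. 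A short linear-algebra argument — given $w$, use surjectivity of $q\circ L$ to split a preimage of a class in $T_xM/T_xX$ into its $T_nN$- and $T_sS$-components, and conversely correct the $T_nN$-component using an element of $\ker(q\circ L)$ projecting to $w$ — shows these two surjectivity conditions are equivalent. Hence $s$ is a regular value of $\pi|_W$, i.e. $d(\pi|_W)$ is surjective at every point of $(\pi|_W)^{-1}(s)=F_s^{-1}(X)\times\{s\}$, exactly when $F_s\pitchfork X$. The case $s\notin\pi(W)$ is automatic: then $F_s^{-1}(X)=\emptyset$, so $F_s\pitchfork X$ holds vacuously and $s$ is vacuously a regular value.

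Finally I would invoke Sard's theorem for the smooth map $\pi|_W\colon W\to S$ between (second countable) smooth manifolds: the set of critical values of $\pi|_W$ has measure zero in $S$. By the claim, every $s\in S$ outside this null set satisfies $F_s\pitchfork X$, which is the desired conclusion. The steps requiring care are the equivalence of the two surjectivity conditions (the linear-algebra heart of the argument) and checking that Sard's hypotheses genuinely apply — in particular that $W$ is an honest manifold, which is precisely where transversality of the full family $F$ (as opposed to individual $F_s$) is used, and that the manifolds are second countable so that "measure zero'' is unambiguous. I expect no analytic difficulty whatsoever; the entire content is the bookkeeping of the preimage construction and the linear algebra identifying transversality of $F_s$ with regularity of $\pi|_W$.
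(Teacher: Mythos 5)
The paper does not prove this statement at all: it is quoted verbatim from Lee's textbook (Theorem~6.35 in \cite{lee}) and used as a black box. Your argument is correct and is precisely the standard proof given there — form $W=F^{-1}(X)$, which is a submanifold because the full family $F$ is transverse to $X$, identify transversality of $F_s$ with $s$ being a regular value of the projection $\pi|_W\colon W\to S$ via the linear-algebra equivalence you describe, and conclude by Sard's theorem — so there is nothing to compare beyond noting that you have correctly reconstructed the cited proof.
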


\begin{lemma}\label{L:genflat}
For each eigenvalue $\mu>0$, there exists an open and dense set in $\mathcal{E}_\mu$ (in the $C^k$-topology, $k\geq2$) of eigenfunctions that are Morse.
\end{lemma}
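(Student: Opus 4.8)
The plan is to apply the parametric transversality theorem with the parameter space being the eigenspace $\mathcal{E}_\mu$ itself. Fix the eigenvalue $\mu>0$ and write $d=\dim \mathcal{E}_\mu<\infty$; pick an $L^2$-orthonormal basis $\phi_1,\dots,\phi_d$ of $\mathcal{E}_\mu$, so that a general eigenfunction is $u_s=\sum_{i=1}^d s_i\phi_i$ with $s=(s_1,\dots,s_d)\in\mathbb{R}^d\cong \mathcal{E}_\mu$. Being Morse is the condition that the map $du_s:\mathbb{T}^2\to T^*\mathbb{T}^2$ be transverse to the zero section; equivalently, that $F:\mathbb{T}^2\times\mathbb{R}^d\to\mathbb{R}^2$ defined by $F(p,s)=\big(\partial_x u_s(p),\partial_y u_s(p)\big)$ have $0$ as a regular value of each $F_s=F(\cdot,s)$. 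So it suffices to check that $F$ itself is transverse to $0\in\mathbb{R}^2$, i.e.\ that the full differential $DF(p,s)$ is surjective at every point where $F(p,s)=0$; the parametric transversality theorem then gives a full-measure set of $s$ for which $F_s$ is transverse to $0$, which is exactly the set of $s$ with $u_s$ Morse, and full measure in particular gives density. Openness is separate and easy: the Morse condition is open in the $C^2$-topology on a compact manifold, and on the finite-dimensional space $\mathcal{E}_\mu$ the $C^k$-topology coincides with the Euclidean topology on the coefficients $s$ (all eigenfunctions here are smooth, indeed trigonometric polynomials), so the Morse eigenfunctions form an open set.

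The key step is therefore the transversality of $F$. Suppose $F(p_0,s_0)=0$, i.e.\ $p_0$ is a critical point of $u_{s_0}$. We must show the differential of $F$ in the $(p,s)$ variables is onto $\mathbb{R}^2$. It is enough to produce, for each of the two coordinate directions $e_1,e_2$ in the target, a variation of $s$ alone hitting it; concretely, differentiating $F$ in the $s_i$ direction at $(p_0,s_0)$ gives $\big(\partial_x\phi_i(p_0),\partial_y\phi_i(p_0)\big)$, so the $s$-partial of $DF$ is surjective precisely when the vectors $\{(\partial_x\phi_i(p_0),\partial_y\phi_i(p_0))\}_{i=1}^d$ span $\mathbb{R}^2$, equivalently when there is \emph{some} $\psi\in\mathcal{E}_\mu$ with $d\psi(p_0)$ an arbitrarily prescribed covector. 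Because $\mu>0$, the eigenspace $\mathcal{E}_\mu$ contains $\sin(k\cdot q + \theta)$ for a suitable frequency vector $k\neq 0$ with $|k|^2=\mu$ and any phase $\theta$; choosing the phase so the argument equals $0$ or $\pi/2$ at $p_0$ makes $d\psi(p_0)$ equal to $k$ up to a nonzero scalar, and since $\mathcal{E}_\mu$ is invariant under the reflections and rotations of $\mathbb{T}^2$ that act on the lattice points of norm$^2$ equal to $\mu$, one gets at least two linearly independent such frequency vectors $k$ (namely $(k_1,k_2)$ and $(-k_1,k_2)$ when both are nonzero, or $(k_1,0)$ and $(0,k_1)$ in the degenerate case), hence two linearly independent covectors in $\{d\psi(p_0):\psi\in\mathcal{E}_\mu\}$. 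This shows the $s$-partial alone is already surjective, so $DF(p_0,s_0)$ is surjective and $F\pitchfork 0$.

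The main obstacle is the bookkeeping in that last step: one has to be careful that the eigenspace $\mathcal{E}_\mu$ really does contain enough independent directions at an \emph{arbitrary} point $p_0$, handling separately the cases $\mu=m^2$ with the lattice circle $\{|k|^2=\mu\}$ hitting an axis (so frequencies $(m,0),(0,m)$) versus the generic case where it contains an off-axis point $(k_1,k_2)$ with $k_1k_2\neq0$ (so frequencies $(k_1,k_2),(k_1,-k_2)$ are independent). In every case with $\mu>0$ the span of $\{d\psi(p_0)\}$ is two-dimensional because one can realize at least two non-parallel lattice vectors as gradients at $p_0$ via phase choices; the constant eigenfunction (which would give $d\psi(p_0)=0$) only arises for $\mu=0$, which is excluded by hypothesis. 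Once transversality of $F$ is in hand, the parametric transversality theorem yields density of the Morse eigenfunctions in $\mathcal{E}_\mu$, openness was noted above, and the lemma follows.
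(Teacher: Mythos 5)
Your proposal is correct, and it sets up the argument exactly as the paper does: the same map $F(p,s)=\nabla u_s(p)$ on $\mathbb{T}^2\times\mathbb{R}^d$ with the eigenspace coefficients as parameters, the same parametric transversality theorem with target $\{0\}\subset\mathbb{R}^2$, and the same passage from ``almost every $s$'' to density plus the openness of the Morse condition in $C^2$ on a compact manifold. Where you genuinely diverge is in the verification that $F$ is transverse to $\{0\}$. The paper argues by contradiction: if the gradients $\nabla f_i(x_*)$ of a basis of $\mathcal{E}_\mu$ were all parallel to a fixed vector $v$ at one point, then translation invariance of the flat Laplacian forces every $\mu$-eigenfunction to be a function of $v\cdot x$ alone, hence of the form $A_1\cos(\tfrac{\sqrt{\mu}}{|v|}v\cdot x)+A_2\sin(\tfrac{\sqrt{\mu}}{|v|}v\cdot x)$, so the multiplicity would be $2$, contradicting the fact that it is at least $4$. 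You instead give a direct construction: for any $p_0$ and any lattice vector $k$ with $|k|^2=\mu$, the eigenfunction $\sin(k\cdot x-k\cdot p_0)$ has gradient exactly $k$ at $p_0$, and the lattice circle $\{|k|^2=\mu\}$ always contains two linearly independent vectors (either $(k_1,\pm k_2)$ with $k_1k_2\neq0$, or $(m,0)$ and $(0,m)$), so the $s$-derivatives of $F$ already span $\mathbb{R}^2$ at \emph{every} point of the torus, not just on $F^{-1}(0)$. Your route is more elementary and explicit, exploiting the plane-wave basis directly; the paper's route is less computational at this step and uses only the abstract inputs of translation invariance and multiplicity at least $4$, which is the kind of argument that would survive in settings where an explicit eigenbasis is not available. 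Both are complete proofs of the lemma.
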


\begin{proof}
Let us consider the map
\begin{align*}
F:\mathbb{T}^2\times \R^m &\to \R^2\\
(x,a) &\mapsto F(x,a)=a_1 \nabla f_1(x)+ \cdots +a_m \nabla f_m(x)\,,
\end{align*}
where $f_1,\dots,f_m$ is an $L^2$-orthonormal basis of eigenfunction in $\mathcal{E}_\mu$, $m$ is the multiplicity of the eigenvalue $\mu$, and $x=(x_1,x_2)$ parameterizes $\mathbb T^2$.

For each $a$, the eigenfunction $a_1f_1+ \cdots +a_mf_m$ is Morse if and only if the zero level-set of the map $F_a:=F(\cdot,a)$ is regular. It is then obvious that the lemma follows if we show that this is the case for almost all $a\in \R^m$ (because a set of total measure is dense, and being Morse is an open property; passing from an open and dense set  of $a\in\R^m$ to the $C^k$ topology of eigenfunctions in $\mathcal{E}_\mu$ is of course immediate). To prove this, we use the parametric transversality theorem stated above with $X=\{0\}$ and $S=\R^m$. Hence it suffices to show that $F$ is transverse to $X=\{0\}$.

Let $(x_*,a_*)$ be a point such that $F(x_*,a_*)=0$. The differential of $F$ is given by the following $2\times (m+2)$ matrix:

\begin{equation}
DF=
\begin{bmatrix}
a_1 f_{1x_1 x_1}+\cdots +a_m f_{m x_1 x_1} & a_1 f_{1x_1 x_2}+\cdots +a_m f_{m x_1 x_2} & f_{1x_1} \cdots f_{mx_1} \\
a_1 f_{1x_2 x_1}+\cdots +a_m f_{m x_2 x_1} & a_1 f_{1x_2 x_2}+\cdots +a_m f_{m x_2 x_2} & f_{1x_2} \cdots f_{mx_2}
\end{bmatrix}\,.
\end{equation}

Assume that $\rank DF(x_*,a_*) <2$. This implies that $\nabla f_l(x_*)=b_l \nabla f_1(x_*)$ for $l=2,\dots, m$ and some constants $b_l\in \R$. For convenience, let us set $v:=\nabla f_1(x_*)$, hence $\nabla f_l(x_*)=b_lv$. Accordingly, for any $\mu$-eigenfunction $f\in \mathcal{E}_\mu$, $\nabla f(x_*)=b v$ for some $b\in \R$. By the transitivity of translations on $\mathbb{T}^2$ and the fact that they commute with the Laplacian, it is easy to conclude that for each $f\in \mathcal{E}_\mu$ there exists a function $b$ such that
$$\nabla f(x)=b(x) v$$
for all $x\in \mathbb T^2$. Moreover, since $0=\curl{\nabla f}=\nabla b(x) \times v$, we have that $b$ is a first integral of the constant vector $v^\perp$ and hence $b(x)=B(v_1x_1+v_2x_2)$ where $v=(v_1,v_2)$.

Integrating the vector equation $\nabla f(x)=B(v_1x_1+v_2x_2)v$, we obtain that any $\mu$-eigenfunction $f$ is of the form
$$f(x_1,x_2)=\widetilde{B}(v_1x_1+v_2x_2)\,,$$
where $\widetilde{B}$ is a primitive of $B$. Being $f$ an eigenfunction of the Laplacian, a straightforward computation shows that it satisfies the equation
$$\frac{d^2\widetilde{B}(s)}{ds^2}+\frac{\mu}{|v|^2}\widetilde{B}(s)=0\,,$$
with $s=v\cdot x$. Hence $\widetilde{B}(s)=A_1\cos (\frac{\mu^{\frac{1}{2}}}{|v|}s)+A_2 \sin (\frac{\mu^{\frac{1}{2}}}{|v|}s)$, where $A_1,A_2\in \R$, thus implying that any $\mu$-eigenfunction $f$ is of the form:

$$f(x_1,x_2)=A_1\cos \Big(\frac{\mu^{\frac{1}{2}}}{|v|} v\cdot x\Big)+A_2 \sin \Big(\frac{\mu^{\frac{1}{2}}}{|v|} v \cdot x\Big).$$

In particular, this implies that the multiplicity of the eigenvalue $\mu$ is $2$, which is a contradiction with the fact that it is at least $4$. Hence $DF(x_*,a_*)$ is of rank $2$ at any point $(x_*,a_*)\in F^{-1}(0)$, and therefore the zero-set of $F$ is regular. The lemma then follows.
\end{proof}

\subsection{Existence of escape orbits for generic $b$-Beltrami vector fields}\label{SS.escape}

Let us consider an asymptotically symmetric $b$-Beltrami vector field
\begin{equation*}
X=X_x\partial_x+X_y\partial_y+zX_z\partial_z
\end{equation*}
on an asymptotically flat $b$-manifold, and its dual $b$-form
\[
\alpha=X_xdx+X_ydy+\frac{X_z}{z}dz\,.
\]

The following lemma is straightforward:

\begin{lemma}\label{L:sym}
In the neighborhood $\mathcal N(Z)$ the functions $X_x$, $X_y$ and $X_z$ do not depend on $z$, i.e., $X_x\equiv X_x(x,y), X_y\equiv X_y(x,y), X_z\equiv X_z(x,y)$. Moreover, $X_z$ (which is minus the exceptional Hamiltonian) is an eigenfunction of the flat Laplacian on $\mathbb T^2$ with eigenvalue $\lambda^2$, and the components $X_x$ and $X_y$ are given by
\begin{align*}
X_x(x,y)=\frac{1}{\lambda}{\partial_y X_z(x,y)}\,, \qquad X_y(x,y)=-\frac{1}{\lambda}{\partial_x X_z(x,y)}\,.
\end{align*}
\end{lemma}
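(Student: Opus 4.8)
The plan is to extract the $z$-independence of the components from the commutation relation $[X,z\partial_z]=0$ and then substitute this information into the $b$-Beltrami system already derived (for a general asymptotically exact $b$-metric) in the proof of Proposition~\ref{prop:eigenfunctionlaplacian}.

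First I would write $X=X_x\partial_x+X_y\partial_y+zX_z\partial_z$ in the coordinates $(x,y,z)$ on $\mathcal N(Z)$, with $X_x,X_y,X_z$ a priori functions of all three variables, and compute the bracket with $z\partial_z$. A one-line calculation with the Leibniz identity $[fV,gW]=fg[V,W]+f(Vg)W-g(Wf)V$ gives
\[
[X,z\partial_z]=-z(\partial_zX_x)\,\partial_x-z(\partial_zX_y)\,\partial_y-z^2(\partial_zX_z)\,\partial_z\,.
\]
Asymptotic symmetry forces the left-hand side to vanish on $\mathcal N(Z)$, so on $\{z\neq0\}$ one gets $\partial_zX_x=\partial_zX_y=\partial_zX_z=0$; since these partial derivatives are continuous and $\{z\neq0\}$ is dense in $\mathcal N(Z)$, they vanish throughout $\mathcal N(Z)$, which is the first assertion.

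Next I would specialize the $b$-Beltrami system from the proof of Proposition~\ref{prop:eigenfunctionlaplacian} — the version valid on the whole tubular neighbourhood, before restriction to $Z$ — to the flat $b$-metric $h_{11}=h_{22}=1$, $h_{12}=0$, $\sqrt{\det h}=1$. It becomes
\[
\lambda X_z=-\partial_yX_x+\partial_xX_y\,,\qquad -\lambda X_y=\partial_xX_z-z\partial_zX_x\,,\qquad \lambda X_x=\partial_yX_z-z\partial_zX_y\,.
\]
Dropping the $\partial_z$-terms by the previous step, the last two equations read $X_x=\frac1\lambda\partial_yX_z$ and $X_y=-\frac1\lambda\partial_xX_z$, which is the final assertion of the lemma; plugging these back into the first equation gives $\lambda X_z=-\frac1\lambda(\partial_{xx}X_z+\partial_{yy}X_z)$, i.e.\ $\Delta X_z=-\lambda^2X_z$ for the flat Laplacian $\Delta=\partial_{xx}+\partial_{yy}$ on $\mathbb T^2$. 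Since $X_z$ is $z$-independent on $\mathcal N(Z)$, it descends to a function on $\mathbb T^2$, hence is an eigenfunction of the flat Laplacian with eigenvalue $\lambda^2$.

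There is no real obstacle here, consistent with the lemma being called straightforward; the only point needing a word of care is the passage from $z\partial_zX_\bullet=0$ — which a priori constrains the derivative only off $Z$ — to $\partial_zX_\bullet=0$ on all of $\mathcal N(Z)$, which follows at once from the $C^k$-regularity of $X$ and the density of $\{z\neq0\}$.
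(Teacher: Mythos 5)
Your proposal is correct and follows essentially the same route as the paper: the paper also deduces $z$-independence of the components from $[X,z\partial_z]=0$ and then invokes the coordinate form of the $b$-Beltrami system (derived in the proof of Proposition~\ref{prop:eigenfunctionlaplacian}) specialized to the flat metric, exactly as you do. You have merely written out the bracket computation and the continuity argument at $\{z=0\}$ that the paper leaves as ``easy to check,'' which is fine.
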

\begin{proof}
It is easy to check that the assumption $[X,z\partial_z]=0$ implies that $X_x\equiv X_x(x,y), X_y\equiv X_y(x,y), X_z\equiv X_z(x,y)$. Now, the lemma follows arguing exactly as in the proof of Proposition~\ref{prop:ReebHamdim3} using the $b$-Beltrami equation.
\end{proof}

Applying Lemma~\ref{L:genflat}, for any $\delta'>0$ and $k\geq 2$, we can take a $\lambda^2$-eigenfunction $\widehat{X_z}(x,y)$ of the Laplacian on $\mathbb T^2$ which is Morse and $\delta'$-close to $X_z(x,y)$, i.e.
$$
||\widehat{X_z}-X_z||_{C^{k+1}(\mathbb T^2)}<\delta'\,.
$$
Moreover, we can also assume that the zero set of $\widehat{X_z}(x,y)$ is regular, because it is a property that holds for an open and dense set in $\mathcal{E}_{\lambda^2}$ in the $C^k$-topology, $k\geq2$, cf.~\cite[Proposition 4]{peraltaradu}.

Then, the vector field
\begin{equation}\label{eq:model}
\widehat X:=\frac{1}{\lambda}{\partial_y \widehat{X_z}}\partial_x-\frac{1}{\lambda}{\partial_x \widehat{X_z}}\partial_y+z\widehat X_z\partial_z
\end{equation}
is a $b$-Beltrami vector field for the $b$-flat metric in a neighborhood $\mathcal N(Z)$ of $Z$, and we have the obvious estimate
\begin{equation}\label{est1}
||\widehat X-X||_{C^k(N(Z))}<C\delta'\,.
\end{equation}

Let us take a second neighborhood $\mathcal N_1(Z)\subset \mathcal N(Z)$, and notice that the $1$-form
\[
\widehat\beta:=\frac{1}{\lambda}{\partial_y \widehat{X_z}}dx-\frac{1}{\lambda}{\partial_x \widehat{X_z}}dy+\frac{\widehat X_z}{z}dz\,,
\]
which is the dual of $\widehat X$ using the flat $b$-metric, is contact in $\mathcal N(Z)\backslash \mathcal N_1(Z)$ and is close to $\alpha$ as
\[
||\widehat\beta-\alpha||_{C^k(\mathcal N(Z)\backslash \mathcal N_1(Z))}<C\delta'\,.
\]

We claim that the $1$-form $\widehat\beta$ can be extended as a $b$-contact form $\beta$ on the whole manifold $M$, which is close to $\alpha$ in the following sense:
\begin{equation}\label{est2}
||\beta-\alpha||_{C^k(M\backslash \mathcal N_1(Z))}<C\delta'\,.
\end{equation}
Indeed, the following lemma is standard, we provide a short proof for the sake of completeness:
\begin{lemma}
Let $\alpha$ be a contact form on a manifold $M_0$, and $\widehat \beta$ a contact form on an open set $\mathcal U\subset M_0$ so that $||\alpha-\widehat \beta||_{C^k(\mathcal U)}<\varepsilon_0$. If $\varepsilon_0$ is small enough, and $\mathcal U_1\subset \mathcal U$ is any open set properly contained in $\mathcal U$, there exists a contact form $\beta$ on $M_0$ which satisfies that $\beta|_{\mathcal U_1}=\widehat\beta$ and $||\beta-\alpha||_{C^k(M_0)}<C\varepsilon_0$.
\end{lemma}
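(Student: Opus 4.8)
The plan is to prove the gluing lemma by a standard convex-interpolation argument using a cutoff function, combined with the openness of the contact condition. The key observation is that the contact condition $\gamma\wedge d\gamma\neq 0$ is an open condition in the $C^1$-topology on $1$-forms, and that on the region where $\alpha$ and $\widehat\beta$ are both contact and $C^k$-close, any convex combination $\rho\,\widehat\beta+(1-\rho)\,\alpha$ is automatically contact provided the distance $\varepsilon_0$ is small enough. First I would fix a smooth cutoff function $\chi\colon M_0\to[0,1]$ with $\chi\equiv 1$ on $\mathcal U_1$ and $\mathrm{supp}\,\chi\subset\mathcal U$, and define
\[
\beta:=\chi\,\widehat\beta+(1-\chi)\,\alpha\,.
\]
By construction $\beta=\widehat\beta$ on $\mathcal U_1$ and $\beta=\alpha$ outside $\mathcal U$, so $\beta$ agrees with globally contact forms on those two regions; the only thing to check is that $\beta$ is contact on the transition region $\mathcal U\setminus\mathcal U_1$, where $\chi$ varies.

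On the transition region, write $\beta=\alpha+\chi(\widehat\beta-\alpha)$ and expand
\[
\beta\wedge d\beta=\alpha\wedge d\alpha+\chi\,\big(\alpha\wedge d(\widehat\beta-\alpha)+(\widehat\beta-\alpha)\wedge d\alpha\big)+d\chi\wedge\alpha\wedge(\widehat\beta-\alpha)+\chi^2(\ldots)+\chi\,d\chi\wedge(\ldots)\,,
\]
where every term other than $\alpha\wedge d\alpha$ carries at least one factor of $\widehat\beta-\alpha$ or $d(\widehat\beta-\alpha)$. Since $\|\alpha-\widehat\beta\|_{C^k(\mathcal U)}<\varepsilon_0$ with $k\geq1$, each such term is bounded in $C^0$ by $C\varepsilon_0$, with $C$ depending on $\alpha$, on $\|d\chi\|_{C^0}$ (hence on the fixed pair $\mathcal U_1\subset\mathcal U$), but not on $\varepsilon_0$. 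Because $\alpha$ is contact, $\alpha\wedge d\alpha$ is nowhere zero, and on the compact transition region $\overline{\mathcal U\setminus\mathcal U_1}$ it is bounded away from zero; choosing $\varepsilon_0$ small enough that $C\varepsilon_0$ is less than this lower bound forces $\beta\wedge d\beta\neq0$ there. Thus $\beta$ is contact on all of $M_0$.

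Finally, the estimate $\|\beta-\alpha\|_{C^k(M_0)}<C\varepsilon_0$ is immediate from $\beta-\alpha=\chi(\widehat\beta-\alpha)$, the Leibniz rule (derivatives of $\chi$ contribute constants depending only on the fixed cutoff), and the hypothesis $\|\widehat\beta-\alpha\|_{C^k(\mathcal U)}<\varepsilon_0$. The main (and only) subtlety is the bookkeeping that ensures the constant $C$ in the contact estimate is genuinely independent of $\varepsilon_0$ — which it is, since $\chi$ and $\alpha$ are fixed before $\varepsilon_0$ is chosen — and that the region $\overline{\mathcal U\setminus\mathcal U_1}$ on which we need a uniform positive lower bound for $|\alpha\wedge d\alpha|$ is compact, which holds because $\mathcal U_1$ is properly contained in $\mathcal U$. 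No genuinely hard estimate is involved; everything reduces to the openness of the contact condition and a partition-of-unity-style interpolation.
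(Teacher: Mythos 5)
Your proposal is correct and follows essentially the same route as the paper: a cutoff-function convex interpolation $\beta=\chi\widehat\beta+(1-\chi)\alpha$, an expansion of $\beta\wedge d\beta$ in which every term beyond the leading one carries a factor of $\widehat\beta-\alpha$ and is hence $O(\varepsilon_0)$ in $C^0$, and the openness of the contact condition on the (compact) transition region. The only cosmetic difference is that you compare everything to $\alpha\wedge d\alpha$, whereas the paper groups the leading terms as the convex combination $F(2-F)\,\widehat\beta\wedge d\widehat\beta+(1-F)^2\,\alpha\wedge d\alpha$; both yield the same conclusion.
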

\begin{proof}
Take a smooth cutoff function $F:M\to\mathbb [0,1]$ which is equal to $1$ in $\mathcal U_1$, $F>0$ in $\mathcal U$, and $0$ in $M\backslash \mathcal U$. Let us define the $1$-form on $M_0$
\[
\beta:=F\widehat \beta +(1-F)\alpha\,.
\]
Obviously $\beta|_{\mathcal U_1}=\widehat \beta$, $\beta|_{M_0\backslash \mathcal U}=\alpha$ and
\[
||\beta-\alpha||_{C^k(M_0)}=||F(\widehat \beta -\alpha)||_{C^k(\mathcal U\backslash \mathcal U_1)}<C\varepsilon_0\,.
\]
It remains to check that $\beta$ is a contact form on $M_0$, so let us compute
\begin{align*}
\beta \wedge d\beta = F^2\widehat \beta \wedge d\widehat \beta + (1-F)^2\alpha\wedge d\alpha + F(1-F)(\widehat \beta \wedge d\alpha + \alpha\wedge d\widehat \beta)+dF\wedge \widehat \beta \wedge\alpha\,.
\end{align*}
Noticing that the assumption $||\alpha-\widehat \beta||_{C^k(\mathcal U)}<\varepsilon_0$ implies the estimates
\begin{align*}
&||\widehat \beta \wedge \alpha||_{C^0(\mathcal U)}<C\varepsilon_0\\
&||\widehat \beta \wedge d\alpha + \alpha\wedge d\widehat \beta-2\widehat\beta\wedge d\widehat\beta||_{C^0(\mathcal U)}<C\varepsilon_0\,,
\end{align*}
we conclude
\begin{align*}
|\beta\wedge d\beta|&>|F^2\widehat \beta \wedge d\widehat \beta + (1-F)^2\alpha\wedge d\alpha + 2F(1-F)\widehat \beta \wedge d\widehat\beta|-C\varepsilon_0\\
&=|F(2-F)\widehat \beta \wedge d\widehat \beta+(1-F)^2\alpha\wedge d\alpha|-C\varepsilon_0>c_0>0\,,
\end{align*}
where in the last inequality we have used that $F\in [0,1]$ and $\alpha$ and $\widetilde \beta$ are contact forms. This proves that $\beta$ is a contact form on $M_0$ and the lemma follows.
\end{proof}
The existence of the $b$-contact form $\beta$ satisfying the estimate~\eqref{est2} then immediately follows applying this lemma with $M_0=M\backslash \mathcal N_1(Z)$, $\mathcal U=\mathcal N(Z)\backslash \mathcal N_1(Z)$, $\mathcal U_1=\mathcal N_2(Z)\backslash \mathcal N_1(Z)$ and $\varepsilon_0=C\delta'$, where $\mathcal N_2(Z)$ is a neighborhood of $Z$ properly contained in $\mathcal N(Z)$ and containing $\mathcal N_1(Z)$.

The Beltrami-contact correspondence Theorem~\ref{thm:bBeltramicorrespondence} then implies that the Reeb field $Y$ of the $b$-contact form $\beta$ is $b$-Beltrami for some (weakly) compatible metric $\widehat g$. Since $\beta|_{\mathcal N_1(Z)}=\widehat\beta$, whose dual vector field with the flat $b$-metric is the $b$-Beltrami vector field $\widehat X$ by construction, then $\widehat g$ can be taken to be $b$-flat in the neighborhood $\mathcal N_1(Z)$ and $Y|_{\mathcal N_1(Z)}=\widehat X$ . Therefore, $\widehat g$ is asymptotically $b$-flat and $C\delta'$-close to $g$, although generally different from $g$ in $M\backslash \mathcal N_1(Z)$.

Using the estimates~\eqref{est1} and~\eqref{est2}, we then conclude that for any $\delta>0$ small enough, there is a $b$-Beltrami vector field $Y$ on $(M,Z)$ endowed with an asymptotically flat $b$-metric such that
$$||Y-X||_{C^k(M)}<C\delta'=:\delta$$
and its exceptional Hamiltonian is a Morse function on $Z$ with regular zero set. This proves the $C^k$-density of the aforementioned properties of the exceptional Hamiltonian. The openness is immediate because being Morse and having regular zero set are open conditions in the $C^k$ topology, $k\geq 2$.
\begin{remark}
In what follows, by a generic asymptotically symmetric $b$-Beltrami vector field on an asymptotically flat $b$-manifold we will mean a vector field whose exceptional Hamiltonian is Morse and has regular zero set.
\end{remark}

Once the genericity of the aforementioned class of $b$-Beltrami vector fields on asymptotically flat $b$-manifolds has been established, the proof of the second part of Theorem~\ref{thm:mainthm2} is exactly the same as the proof of Theorem~\ref{thm:mainthm} via the local analysis of the zero points of the generic $b$-Beltrami vector field on $Z$. One just has to note that the first Betti number of $\mathbb T^2$ is $2$, and hence we get the lower bound of $4$ escape orbits.

\subsection{Generic existence of generalized singular periodic orbits}

In this section we prove the first part of Theorem~\ref{thm:mainthm2}. We follow the same notation as in Section~\ref{SS.escape} without further mention.

Let us consider a generic asymptotically symmetric $b$-Beltrami vector field $X$ on an asymptotically flat $b$-manifold $(M,Z,g)$. Then the component $X_z$ is a Morse eigenfunction of the flat Laplacian on $\mathbb T^2$. Take a point $p_0=(x_0,y_0,0)\in Z$ where $X_z(x,y)$ attains its minimum value, which is negative because any (nonconstant) eigenfunction has zero mean:
\[
X_z(x_0,y_0)<0\,.
\]
Our goal is to analyze the integral curves of $X$ near the point $p_0$. To this end we recall that in a neighborhood $\mathcal N(Z)$ of the critical surface an asymptotically symmetric $b$-Beltrami vector field has the form, cf. Lemma~\ref{L:sym},
\[
\frac{1}{\lambda}{\partial_y X_z(x,y)}\partial_x-\frac{1}{\lambda}{\partial_x X_z(x,y)}\partial_y+zX_z(x,y)\partial_z\,.
\]

It is obvious that $X_z$ is a first integral of $X$. Moreover, doing a Taylor expansion of $X_z$ at $p_0$, we obtain that
\[
\dot z=zX_z(x,y)=zX_z(x_0,y_0)+O(|z(x-x_0)|+|z(y-y_0)|)\leq 0
\]
if $(x,y,z)\in B\cap\{z\geq 0\}$, and in fact $\dot z=0$ only on the disk $D:=B\cap \{z=0\}$. Here $B$ is a small enough neighborhood of $p_0$. Therefore, the value of the defining coordinate $z$ decreases along the integral curves of $X$ near $p_0$. Noticing that the level sets of $X_z$ near $p_0$ are cylindrical, it is easy to infer that there is a positive constant $\delta>0$ such that the compact set
\[
K:=\{(x,y,z)\in \tilde B: \delta/2 \leq z\leq \delta \}\,,
\]
where $\tilde B\subset B$ is a smaller closed neighborhood of $p_0$, is attracted by the critical surface $Z$, i.e., the $\omega$-limit of $K$ is contained in the disk $D$.

Since $X$ is volume-preserving in $M\backslash Z$ (because it is divergence-free with respect to the $b$-volume form associated to the $b$-metric $g$), it follows that the $\alpha$-limit of the compact set $K$ must intersect the critical surface $Z$. Specifically, there exists a sequence of points $\{p_{k}\}_{k=1}^\infty\subset K$ and a decreasing sequence of negative times $\{t_{k}\}_{k=1}^\infty$ with $\lim_{k} t_k =-\infty$ such that $\varphi_{t_k}(p_k)\to p_*\in Z$. Here, $\varphi_t$ denotes the flow defined by the vector field $X$ on $M$. By compactness, after taking a subsequence if necessary, we have that $p_{k}\to \widetilde p\in K$, and therefore the continuity of $\varphi_t$ with respect to initial conditions implies that (up to a subsequence),
\[
\varphi_{t_k}(\widetilde p) \to p_*\in Z\,.
\]
Since the $\omega$-limit of $\widetilde p$ is contained in $D\subset Z$, we conclude that the orbit $\{\varphi_t(p):-\infty <t<\infty\}$ is a generalized singular periodic orbit, so Theorem~\ref{thm:mainthm2} follows.

We finish this section noticing that, using the same techniques, we can prove the singular Weinstein conjecture on globally flat $b$-manifolds, for globally symmetric $b$-Beltrami vector fields. By globally symmetric we mean that $X$ commutes with the globally defined vector field $\sin z\partial_z$ on the whole $\mathbb T^3$.

\begin{proposition}\label{prop:genericsingularweinstein}
Let $X$ be a globally symmetric, $b$-Beltrami vector field on a globally flat $b$-manifold that is not identically zero on $Z$. Then $X$ has at least two singular periodic orbits.
\end{proposition}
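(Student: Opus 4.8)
The plan is to exploit the global structure of the problem on $\mathbb{T}^3$ with the globally flat $b$-metric $g = dx^2 + dy^2 + \frac{dz^2}{\sin^2 z}$, where the critical surface $Z = \{z=0\}\cup\{z=\pi\}$ consists of two copies of $\mathbb{T}^2$. Since $X$ is globally symmetric, it commutes with $\sin z\,\partial_z$, and arguing as in Lemma~\ref{L:sym} the components $X_x, X_y, X_z$ are independent of $z$ on all of $\mathbb{T}^3$, with $X_x = \frac{1}{\lambda}\partial_y X_z$, $X_y = -\frac{1}{\lambda}\partial_x X_z$, and $X_z(x,y)$ an eigenfunction of the flat Laplacian on $\mathbb{T}^2$ with eigenvalue $\lambda^2$. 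In particular $X_z$ has zero mean, so its minimum value $X_z(x_0,y_0)$ is strictly negative and its maximum value $X_z(x_1,y_1)$ is strictly positive (both attained since $X_z$ is continuous on the compact torus, and we do not even need genericity here). Note also that $\dot z = \sin z \cdot X_z(x,y)$ along integral curves, so $X_z$ is a first integral and the sign of $\dot z$ on the slab $0 < z < \pi$ is governed by the sign of $X_z(x,y)$.

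First I would fix the minimum point $p_0 = (x_0,y_0)$ of $X_z$ and consider the vertical line $\ell_0 = \{(x_0,y_0)\}\times(0,\pi)$. Since $\nabla X_z(x_0,y_0) = 0$, the point $(x_0,y_0)$ is a zero of the planar Hamiltonian vector field $X_x\partial_x + X_y\partial_y$, hence $\ell_0$ is invariant under $X$: on it the dynamics reduces to $\dot z = \sin z\cdot X_z(x_0,y_0)$ with $X_z(x_0,y_0)<0$. Solving this scalar ODE exactly as in Example~\ref{ex:T3flat}, any trajectory in $\ell_0$ satisfies $\lim_{t\to+\infty} z(t) = 0$ and $\lim_{t\to -\infty} z(t) = \pi$; thus this single orbit has $\omega$-limit the point $(x_0,y_0,0)\in\{z=0\}$ and $\alpha$-limit the point $(x_0,y_0,\pi)\in\{z=\pi\}$. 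Since the limit sets are invariant and consist of single points, those points are zeros of $X$ (they are critical points of $X_z$ by construction), so this is a singular periodic orbit in the sense of Definition~\ref{def:spo}. Applying the same argument at the maximum point $(x_1,y_1)$, where $X_z(x_1,y_1)>0$, the vertical line $\ell_1 = \{(x_1,y_1)\}\times(0,\pi)$ is again invariant, and now the scalar equation $\dot z = \sin z\cdot X_z(x_1,y_1)$ with positive coefficient gives $\lim_{t\to+\infty}z(t) = \pi$, $\lim_{t\to-\infty}z(t)=0$, producing a second singular periodic orbit, distinct from the first since $(x_0,y_0)\neq(x_1,y_1)$ (they are a minimum and a maximum of the nonconstant function $X_z$). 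Hence $X$ has at least two singular periodic orbits.

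The only subtlety — and where I would be most careful — is making sure that the endpoints of the vertical orbits are genuinely \emph{zeros} of $X$ as a $b$-vector field, not merely points where the $z$-component vanishes: this is immediate because at a critical point $(x_i,y_i)$ of $X_z$ one has $X_x = \frac{1}{\lambda}\partial_y X_z = 0$ and $X_y = -\frac{1}{\lambda}\partial_x X_z = 0$ as well, so all three components vanish there. A secondary point is that the invariant vertical lines really do foliate the slab into orbits with the claimed limit behavior for \emph{every} choice of initial height $z_0\in(0,\pi)$, which follows from the explicit integration $z(t) = 2\cot^{-1}(\exp(c - X_z(x_i,y_i)\,t))$ valid near each critical fibre (one may work in the coordinate where the metric is $\frac{dz^2}{\sin^2 z}$, or equivalently after the reparametrization used in Example~\ref{ex:T3flat}). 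No genericity hypothesis is needed for this proposition: the existence of a global minimum and a global maximum of the nonconstant eigenfunction $X_z$ is automatic, and each yields a singular periodic orbit, so the conclusion holds for \emph{every} globally symmetric $b$-Beltrami field that is not identically zero on $Z$.
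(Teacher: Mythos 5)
Your proof is correct and takes essentially the same route as the paper's: you use the global symmetry to write $X$ in the form of Lemma~\ref{L:sym} with the defining function $\sin z$, and then run the computation of Example~\ref{ex:T3flat} along the invariant vertical lines over the global minimum and maximum of the eigenfunction $X_z$, whose values are respectively negative and positive. The additional details you give (the explicit integration of $\dot z=\sin z\, X_z(x_i,y_i)$ and the check that the endpoints are genuine zeros of the field at critical points of $X_z$) simply make explicit what the paper's proof leaves implicit.
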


\begin{proof}
Proceeding as in Lemma~\ref{L:sym} but using the globally defined defining function $\sin z$ instead of $z$, it follows that the global symmetry of $X$ implies that it has the form
$$X=\frac{1}{\lambda}\partial_y H \partial_x-\frac{1}{\lambda}\partial_x H \partial_y+H\sin z\partial_z$$
on $\mathbb{T}^3$, where $H=H(x,y)$ satisfies $\Delta H+\lambda^2 H=0$. Then one can do the same computation as in Example~\ref{ex:T3flat}. More precisely, $H(x,y)$ is a first integral of $X$ and each integral curve with initial condition $(p_\pm,z_0)$, where $p_\pm$ is a point on $\mathbb T^2$ where $H$ attains its global minimum (resp. maximum) and $z_0\in (0,\pi)$, is a singular periodic orbit. Here we are using that, being a nontrivial eigenfunction of the Laplacian, the minimum value of $H$ is negative (resp. the maximum value is positive).
\end{proof}
\begin{remark}
We stress that in Proposition~\ref{prop:genericsingularweinstein} we do not need to assume that the $b$-Beltrami vector field is generic. In fact, Theorem~\ref{thm:mainthm2} holds for any asymptotically symmetric $b$-Beltrami vector field provided that it is not identically zero on $Z$. In this case the number of escape orbits that one obtains is at least $2$ instead of $4$.
\end{remark}

\section{Concluding remarks and open problems}\label{sec:concludingrmks}

Singularities naturally occur in regularization procedures in celestial mechanics, see \cite{knauf}. In particular, the McGehee blow-up \cite{McGehee} is classically used in the $N$-body problem to study the manifold at infinity. It is a non-canonical symplectic change of coordinates and therefore the induced geometric structures  are no longer symplectic but of $b^3$-type, where the critical hypersurface is being identified with the manifold at infinity in the original problem. In \cite{MO2}, this is used in combination with Proposition \ref{prop:ReebHamdim3} to prove the existence of infinitely many periodic orbits on positive energy level sets in the restricted planar circular three body problem at the manifold at infinity. Orbits coming and going to the infinity manifold are singular periodic orbits as studied in the present article, and thereby emphasizes the importance of the singular Weinstein conjecture in view of applications in celestial mechanics. This description is reminiscent of \emph{hyperbolic scattering orbits} in the $N$-body problem, as studied in \cite{dmmg} and \cite{MV}. Those are orbits in the $N$-body problem where the mutual distances between the bodies go to infinity. In the language of the present note, those orbits are just singular periodic orbits.

Our article also provides the general framework which includes oscillatory motions in celestial mechanics as they constitute  a particular case of generalized singular periodic orbits. Those are orbits $(q(t),p(t))$ in the phase space $T^*\R^n$ such that
$$\limsup_{t\to\pm \infty} \norm{q(t)}=\infty \text{ and } \liminf_{t\to\pm \infty} \norm{q(t)}<\infty.$$
Under the classical McGehee change of coordinates $r=\frac{2}{x^2}$, where $r$ is the radius of the position in polar coordinates, the condition on the upper limit  is equivalent to saying that the orbit is a generalized singular periodic orbit.
The existence of oscillatory motions in Celestial Mechanics has a long story: In 1960 Sitnikov \cite{s} proved existence of oscillatory motions for the restricted spatial elliptic three body problem when primaries have mass $\mu= 1/2$ and move on ellipses of small enough eccentricity while the third body moves on the (invariant) vertical axis. In 1973 Moser \cite{moser} provided a new proof of Sitnikov results. First results in the planar case were obtained by Llibre and Simó \cite{llibre} in 1980 following Moser’s approach. It was not until the recent work \cite{GMS}
 that their existence has been proved in the restricted planar circular three-body problem for any mass ratio provided the Jacobi constant $J$ is sufficiently large (see also
 \cite{GMSS} for the restricted planar elliptic three-body case).

The authors believe that the techniques and results of the present paper will be useful to tackle interesting problems in classical mechanics, as the existence of scattering orbits, {escape orbits and oscillatory motions in a more general set-up (see for instance \cite{chenciner3})}. We plan to deal with this in future work.

\end{document}